\newcommand{\gothic}{\mathfrak}
\newcommand{\ra}{\rightarrow}
\newcommand{\m}{{\gothic{m}}}
\newcommand{\Tor}{\operatorname{Tor{}}}
\renewcommand{\phi}{\varphi}
\renewcommand{\to}{{\longrightarrow}}
\newcommand{\inc}{\subseteq}
\renewcommand{\phi}{\varphi}
\renewcommand{\to}{{\longrightarrow}}
\newtheorem{thm}{Theorem}[section]
\newtheorem{example}[thm]{Example}
\newtheorem{cor}[thm]{Corollary}
\newtheorem{prop}[thm]{Proposition}
\newtheorem{lemma}[thm]{Lemma}
\newtheorem{definition}[thm]{Definition}
\newtheorem{conj}[thm]{Conjecture}
\newtheorem{remark}[thm]{Remark}
\newtheorem{disc}[thm]{Discussion}
\numberwithin{equation}{section}
\begin{document}
\title{Absolute Integral Closure} 

\author{Craig Huneke}

\address{Department of Mathematics \\
        University of Kansas \\
        Lawrence, KS
        66045}

\email{huneke@math.ku.edu}

\urladdr{http://www.math.ku.edu/\textasciitilde huneke}

\date{Jan. 19, 2010}

\bibliographystyle{amsplain}

\numberwithin{thm}{section}
\begin{abstract} This paper is an expanded version of  three lectures the author
 gave during the summer school, ``PASI: Commutative Algebra and its Connections to Geometry,"
from August 3-14, 2009 held in Olinda, Brazil. 
\end{abstract}

\keywords{absolute integral closure, Cohen-Macaulay}

\subjclass[2010]{ Primary
13A35, 13B22, 13C14, 13D02, 13D45}

\thanks{ The author was partially supported by the NSF, DMS-0756853.}
\maketitle

\tableofcontents
\def\addcontentsline#1#2#3{%
\addtocontents{#1}{\protect\contentsline{#2}{#3}{}}}

\vskip 20 pt

\section{Introduction}

\vskip 10 pt

This paper is based on three talks given at the PASI conference in Olinda, Brazil in the summer of
2009. One point of the paper is  to introduce students to  one aspect of characteristic $p$
methods in commutative algebra. Such methods have been among the most powerful in the
field. The basic method of reduction to
characteristic $p$ is used to prove results for arbitrary Noetherian rings containing a field; the field
can be characteristic $0$. Thus, even though one is often working in positive characteristic, ones main
interest might well be in rings containing the rationals. In the late 1980's, ``tight closure theory" was
discovered by M. Hochster and myself. This theory synthesized many existing reduction to characteristic $p$
proofs into one theory, which has now grown to encompass a large number of different directions.  In this paper,
we'll concentrate on a result which Karen Smith has refered to as a ``crown jewel" of
tight closure theory, namely the fact that the absolute integral closure of a complete local domain
of positive characteristic is Cohen-Macaulay. We first introduce basic concepts, and discuss some rather
amazing properties of absolute integral closures. Then we prove some classical theorems in dimension one.
The main part of the paper will give a recent proof of the Cohen-Macaulayness of the absolute integral closure
in positive characteristic. Applications will be given in Section 6. Some further thoughts
about directions to go will be presented in the last section. 
For unexplained results or definitions, we refer the reader to Eisenbud's book \cite{Ei}.
\section{Basic Concepts}

We begin with a definition:

\begin{definition} {\rm Let $R$ be a domain, and let $R\subset S$, a ring. The \it integral closure \rm
of $R$ in $S$ is the set of all elements $s\in S$ which satisfy an equation of the form
$$s^n + r_1s^{n-1} + ... + r_n = 0,$$ where $r_i\in R$ for all $i$.}
\end{definition}

The set of elements in $S$ which are integral over $R$  form a ring, called the \it integral closure of
$R$ in $S$\rm.  When $S$ is the fraction field of $R$, the resulting ring is called the
integral closure of $R$. The ring $R$ is said to be \it integrally closed \rm if 
$R$ is equal to its integral closure. For example, a well-known result in commutative algebra says that
every UFD is integrally closed. So, e.g., the integers are integrally closed. Another
basic result is states that if $R$ is integrally closed, so is
the ring of polynomials $R[X]$ as well as the ring of formal power series $R[[X]]$.  An obvious induction shows that also
$R[X_1,...,X_n]$ and $R[[X_1,...,X_n]]$ are integrally closed for all $n\geq 1$. 

The main object we will study is given in the following definition.

\begin{definition} {\rm Let $R$ be an integral domain with fraction field $K$. Let $\overline{K}$ be a
fixed algebraic closure of $K$. The integral closure of $R$ in $\overline{K}$, denoted $R^+$, is
called the \it absolute integral closure \rm of $R$.}
\end{definition}

We simply say ``R-plus" to mean the absolute integral closure. This ring has been well-studied in
several different contexts. 

For other interesting results concerning $R^+$ which we will not be writing about, see
\cite{A},\cite{AH}, and \cite{Sm}.

\medskip

\medskip

\begin{disc} {\rm Whenever we study $R^+$, we might as well assume that $R$ is integrally closed itself; if $S$ is
the integral closure of $R$, then $R\subset S\subset R^+$, and $S^+ = R^+$. More generally,
if $S$ is \it any \rm finite integral extension of $R$ which is a domain, then the fraction field
of $S$ is algebraic over the fraction field of $R$, and hence there is an isomorphic copy of $S$ which
contains $R$ and sits inside $R^+$. By an abuse of language, we'll just assume that $S\subset R^+$. 
In this case, $S^+ = R^+$.  Studying $R^+$ is basically studying all integral extensions of $R$
at the same time.}
\end{disc}

\medskip

\begin{remark}  {\rm Continuing the discussion from above, if $R$ is local and complete in the $\m$-adic topology and contains
a field, then $R$ is actually a finite integral extension of a formal power series ring over a
field $k$. Namely, the Cohen structure theorem gives that there exists a field $k$ inside
$R$ such that the composition of maps $k\rightarrow R\rightarrow R/\m$ is an isomorphism.
Moreover, if $x_1,...,x_d$ is a system of parameters for $R$, then the complete local subring of $R$
generated by these elements, $k[[x_1,...,x_d]] = A$ is isomorphic with a formal power series ring
over $k$, and $R$ is module-finite over $A$.
Hence $A^+ = R^+$. 
Thus, if we fix a copy of the residue field, say $k$,  sitting inside $R$,  there is really only one $R^+$ for each dimension; it is the absolute integral closure of
the formal power series ring over  the field $k$. In other words, for each dimension, we are really just studying
one ring, the absolute integral closure of the power series ring over $k$. There is a classical theorem
which gives the structure of such a ring in dimension one over a field of characteristic $0$. See Theorem~\ref{newton}.  } 
\end{remark}

\medskip

 \bigskip

\begin{disc} {\rm It is not difficult to prove that if $R$ is an integral domain, then a domain $S$ integral
over $R$ is isomorphic with $R^+$ if and only if monic polynomials over $S$ factor into monic linear polynomials over $S$.
Using this criterion, one can easily prove two very basic properites of $R^+$. 

\medskip
$\bullet$ Firstly, if $q\in \text{Spec}(R)$, then $(R_q)^+\cong (R^+)_q$, where the latter localization is at the multiplicatively closed set
$R-q$.

\medskip

$\bullet$ Secondly, if $Q\in \text{Spec}(R^+)$, then $R^+/Q\cong (R/(Q\cap R))^+$.  Thus every quotient of $R^+$ by a prime ideal is itself
the absolute integral closure of an appropriate quotient of $R$.}
\end{disc}

We leave these statements as exercises below.

These properties not only allow us to mod out by primes and localize by changing the absolute integral closure
in a similar way, they will give very strong properites. For example, the main result of Section 4 states
that the absolute integral closure of an excellent local domain in positive characteristic is Cohen-Macaulay
in a suitable sense. The second property discussed above then says that the same is true modulo every prime
ideal of the absolute integral closure! This is remarkable.

M. Artin \cite{Ar} proved another amazing property of $R^+$:

\begin{thm} Let $R$ be a domain. If $P,Q$ are prime ideals in $R^+$, then either $P+Q = R^+$, or
$P+Q$ is prime.
\end{thm}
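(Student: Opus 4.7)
The plan is to use the characterization of $R^+$ recalled just before the theorem: a domain $S$ integral over $R$ equals $R^+$ precisely when every monic polynomial over $S$ splits into linear factors over $S$. We assume $P+Q\neq R^+$ and must show $P+Q$ is prime, so suppose $a,b\in R^+$ satisfy $ab\in P+Q$, say $ab=p+q$ with $p\in P$ and $q\in Q$. The goal is to conclude $a\in P+Q$ or $b\in P+Q$.

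Note first that $R^+$ admits arbitrary square roots: given $t\in R^+$, the monic polynomial $X^2-t$ splits in $R^+[X]$, so there is some $\sqrt{t}\in R^+$, and if $t\in P$ then $(\sqrt{t})^2=t\in P$ forces $\sqrt{t}\in P$ by primality. The key computational step is to pick $s=\sqrt{p}\in P$ and $r=\sqrt{ab}\in R^+$, and to observe
\[
(r-s)(r+s)\;=\;r^2-s^2\;=\;ab-p\;=\;q\;\in\;Q.
\]
Primality of $Q$ puts one of $r-s,\,r+s$ into $Q$; in either case $r=(\pm s)+(r\mp s)\in P+Q$, since $P$ is closed under negation. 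Thus $\sqrt{ab}\in P+Q$, and hence $\sqrt{a}\sqrt{b}\in P+Q$ because $\sqrt{a}\sqrt{b}=\pm\sqrt{ab}$ in the domain $R^+$. Iterating this root-extraction descent gives $a^{1/2^n}b^{1/2^n}\in P+Q$ for every $n\geq 0$.

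A complementary move uses quadratic factorization directly on $a$: splitting $X^2-aX+p=(X-c)(X-d)$ in $R^+[X]$ gives $c+d=a$ and $cd=p\in P$, so by primality of $P$ one may arrange $c\in P$. A short direct computation using $ab=cd+q$ yields $db=c(d-b)+q\in P+Q$, so the original question ``is $a\in P+Q$?'' is equivalent to ``is $d\in P+Q$?'', with $db\in P+Q$ replacing $ab\in P+Q$.

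The main obstacle is that these two moves---root descent and quadratic truncation---individually do not terminate, since each only adjusts $a$ by an element already in $P+Q$, leaving its residue class in $R^+/(P+Q)$ unchanged. To close the argument I would pass to a module-finite normal extension $S\subseteq R^+$ containing $a,b,p,q$ (arranging $S/R$ to be Galois up to inseparable issues, which is possible since $R^+$ is the directed union of such extensions), and exploit the transitive action of the Galois group on the primes of $S$ lying over $P\cap R$ and $Q\cap R$. The delicate step, which I expect to be the real difficulty, is combining this Galois-theoretic symmetry with the splitting arguments above to force a suitable decomposition of $a$ or $b$ within $S$ certifying membership in $P+Q$ upon passing back to $R^+$.
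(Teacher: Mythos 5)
Your computations are correct as far as they go, but the proposal is unfinished, and you say so yourself: both the root-extraction descent and the quadratic-truncation move merely replace $a$ (or $ab$) by something congruent to it modulo $P+Q$, so neither terminates, and the proposed Galois-theoretic fix is left entirely speculative. That is a genuine gap, not a polish issue. The Galois action on primes lying over $P\cap R$ and $Q\cap R$ permutes primes within a fiber but gives no control over the ideal $P+Q$ or its quotient ring, so it is far from clear that this route can be made to close.

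What the paper does instead---and what you are very close to, since your ``quadratic truncation'' is the right germ of an idea---is extract \emph{both} primality conditions from a \emph{single} quadratic, keyed to the difference $z=b-a$. Writing $ab=u+v$ with $u\in P$, $v\in Q$, note that $a$ satisfies $a^2+za=ab=u+v$. Now solve $x^2+zx=u$ in $R^+$. Factoring the left side as $x(x+z)$ and using $u\in P$ gives $x\in P$ or $x+z\in P$. Subtracting the two identities gives $(x^2+zx)-(a^2+za)=-v$, i.e.\ $(x-a)(x+a+z)=-v\in Q$, so $x-a\in Q$ or $x+b\in Q$ (since $a+z=b$). Each of the four combinations yields $a\in P+Q$ or $b\in P+Q$ by a linear combination of the two chosen elements; for instance $x+z\in P$ and $x-a\in Q$ give $b=(x+z)-(x-a)\in P+Q$. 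Your setup with $X^2-aX+p$ only encodes the $P$-condition; the paper's choice $X^2+(b-a)X-u$ is engineered so that its defect against the known relation for $a$ lands in $Q$, which is exactly what your argument is missing.
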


This is very far from what happens in rings we typically study. For example in the polynomial ring $k[x,y]$,
the ideals $P = (x+y^2)$ and $Q = (x)$ are prime, but the sum is not. The proof we give is taken from
\cite{HH}.

\begin{proof}
To prove Artin's result, let $ab\in P + Q$. Set $z= b-a$. Notice that $a^2 + za = ab = u+v$ for some $u\in P, v\in Q$.
The equation $X^2+zX= u$ has a solution $x\in R$. Since $u\in P$, it follows that either $x+z\in P$ or $x\in P$. But
$(x^2+zx)-(a^2+za) = u-(u+v)\in Q$, so either $x-a\in Q$ or $x+a+z\in Q$. In each of the four different cases, we
obtain that either $a\in P+Q$ or $b\in P+Q$. For example, if $x+z\in P$ and $x-a\in Q$, then $b= (x+z)-(x-a)\in Q$.
\end{proof}
\medskip

Notice that if $R$ is complete and local, then every finite extension domain of $R$ is also local. In particular,
$R^+$ is a local ring, i.e., has a unique maximal ideal. Therefore the possibility that $P+Q = R$ cannot happen
in this case, and the sum of every set of prime ideals is prime.

 \bigskip

\centerline{\bf Exercises}
\bigskip

1. Let $R$ be a domain with fraction field $K$, and let $L$ be an algebraic extension of $K$.
Prove that $L$ is algebraically closed if and only if every monic polynomial with coefficients in $R$
has a root in $L$.

\medskip

2. Let $R$ be an integral domain. Prove that a domain $S$ integral over $R$ is isomorphic with $R^+$
if and only if every monic polynomial over $S$ factors into monic linear polynomials over $S$.

\medskip

3. Prove that Artin's theorem can be extended to primary ideals: the sum of any two primary ideals in $R^+$
is again primary or the whole ring.
\medskip

4.  Let $R$ be a domain of positive dimension. Prove that $R^+$ is not Noetherian.

\medskip

5. Suppose that $R$ is an integral domain, and $R\inc S\inc R^+$. Prove that $R^+ = S^+$.

\medskip

6. If $q\in \text{Spec}(R)$, then $(R_q)^+\cong (R^+)_q$, where the latter localization is at the multiplicatively closed set
$R-q$.

\medskip

7. If $Q\in \text{Spec}(R^+)$, then $R^+/Q\cong (R/(Q\cap R))^+$.  Thus every quotient of $R^+$ by a prime ideal is itself
the absolute integral closure of an appropriate quotient of $R$.

\vskip.5truein

\section{Dimension One}
\bigskip

 Let $k$ be a field of characteristic $0$, and consider the formal power series ring $k[[t]]$, whose fraction field is
denoted $k((t))$. A famous theorem due to Puiseux, but apparently known to Newton, is that the algebraic closure of
the field $k((t))$ is the union of the fields $k((t^{1/n}))$ for $n\geq 1$.  A recent paper by Kedlaya (\cite{K}) gives
a characterization of this algebraic closure in the case $k$ has positive characteristic; it is quite hard even to describe.
Chevalley  \cite{C} pointed out that the Artin-Schreirer polynomial $x^p-x-t^{-1}$ has no root in the Newton-Puiseux field
$\bigcup_n k((t^{1/n}))$. 
In fact, as we shall see, there are remarkable differences in $R^+$ depending on the characteristic of the ground field.

We give a proof of the  Newton-Puiseux theorem, following a treatment in \cite{No}.

\begin{thm}\label{newton} Let $k$ be an algebraically closed field of characteristic $0$, and let $k((t))$ be the fraction field
of the formal power series ring $k[[t]]$. Then an algebraic closure of $k((t))$ is the Newton-Puiseux field $\cup_n k((t^{1/n}))$.
\end{thm}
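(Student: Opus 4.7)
The plan is to prove the theorem in two stages: first verify that $L := \bigcup_n k((t^{1/n}))$ is an algebraic extension of $K := k((t))$, and then show that every finite extension of $K$ embeds into $L$. For the first stage, I would observe that any finite collection of elements of $L$ lies in $k((t^{1/N}))$ for the least common multiple $N$ of the exponent denominators involved, so $L$ is closed under the field operations. Each $k((t^{1/n}))$ is obtained by adjoining a root of $X^n - t$ (irreducible over $K$ by Eisenstein at $t$), giving $[k((t^{1/n})):K] = n$, and hence $L/K$ is algebraic.

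The substantive content is the second stage. Given a finite extension $F/K$ of degree $n$, I would let $B$ denote the integral closure of $A := k[[t]]$ in $F$ and invoke three structural facts: $B$ is a finite $A$-module (since the complete DVR $A$ is Japanese/excellent); $B$ is local (since the complete local ring $A$ is Henselian and $B$ is a domain, so $B$ cannot decompose); and its residue field, being a finite extension of the algebraically closed field $k$, equals $k$ itself. Together these force $B$ to be a complete DVR totally ramified over $A$ of degree $n$, so that any uniformizer $\pi$ of $B$ satisfies $t = u\pi^n$ for some unit $u \in B^{\times}$.

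Then I would use the characteristic-zero hypothesis to extract an $n$-th root of $u$. The reduction $\bar u \in k^{\times}$ admits an $n$-th root in $k$ because $k$ is algebraically closed; since $n$ is invertible in $k$ (as $\text{char}\, k = 0$), the polynomial $X^n - u$ has a simple root modulo $\m_B$, and Hensel's lemma lifts this to a root $v \in B^{\times}$ with $v^n = u$. Replacing $\pi$ by $\pi/v$ produces a uniformizer whose $n$-th power is exactly $t$. Identifying this uniformizer with $t^{1/n}$ yields an isomorphism $B \cong k[[t^{1/n}]]$ over $A$, whence $F = \operatorname{Frac}(B) \subseteq k((t^{1/n})) \subseteq L$.

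The main obstacle I anticipate is cleanly establishing the structural package for $B$: finiteness over $A$, which requires the Japanese property of complete DVRs, and locality of $B$, which requires the Henselian property of the complete local ring $A$. Once these are in hand, the Hensel-lifting step is routine, but it is precisely this step that fails in positive characteristic $p$ when $p \mid n$, matching Chevalley's Artin-Schreier obstruction $X^p - X - t^{-1}$ mentioned in the text.
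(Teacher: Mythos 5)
Your approach is sound and genuinely different from the paper's. The paper proves the theorem by a Newton polygon argument: given a monic $P(z)$ over $\bigcup_n k[[t^{1/n}]]$, it applies a Tschirnhausen shift to kill the subleading coefficient, substitutes $z = t^r y$ with $r = \min_i r_i/i$ (where $r_i = \operatorname{ord} a_i$) to force some coefficient to be a unit, and then factors by Hensel's lemma; the characteristic-zero hypothesis enters through the Tschirnhausen shift $z \mapsto z + \tfrac{1}{n}a_1$, which requires dividing by $n$. You instead use the structure theory of complete DVRs: integral closures are module-finite, local by Henselianness, totally ramified because $k$ is algebraically closed, and the characteristic-zero hypothesis enters through extracting an $n$-th root of the unit $u$ via Hensel. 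The two routes localize the failure in characteristic $p$ at different but equivalent points (you cannot divide by $n$, respectively $X^n-u$ is inseparable when $p \mid n$); your version is more structural and less computational, while the paper's argument is closer to an algorithm for producing the Puiseux expansion.

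Two small repairs are needed. First, a logical one: your stated plan is to show that every finite extension $F/K$ admits a $K$-embedding into $L$, but this alone does not prove $L$ is algebraically closed, because a $K$-embedding $F(\alpha) \hookrightarrow L$ need not restrict to the identity on a given subfield $F \subset L$. The clean fix is to note that your DVR argument applies verbatim with $k[[t^{1/N}]]$ in place of $k[[t]]$: given $f \in L[X]$, its coefficients lie in some $k((t^{1/N}))$, any root $\alpha$ generates a finite extension of $k((t^{1/N}))$, which is then $k((t^{1/N}))$-isomorphic to $k((t^{1/Nm}))$ for some $m$; since the isomorphism fixes the coefficients of $f$, the image of $\alpha$ is a genuine root of $f$ inside $L$. (Alternatively, observe $L/K$ is normal, being the splitting field of the family $\{X^n - t\}$ once one notes all roots of unity already lie in $k$.) Second, a sign slip: from $t = u\pi^n$ and $v^n = u$, the uniformizer with $n$-th power equal to $t$ is $\pi v$, not $\pi / v$.
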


\begin{proof} We need to prove that every monic polynomial $P(z)\in B[z]$ is reducible, where $B = \cup_nk[[t^{1/n}]]$.
Write $P = z^n+ a_1z^{n-1} + ... + a_n$, where $a_i\in B$.  By using a transformation $z' = z+\frac{1}{n}a_1$, we can
assume without loss of generality that $a_1 = 0$. This is called the ``Tschirhausen transformation". Furthermore, by replacing
$k[[t]]$ by $k[[t^{1/n}]]$ for some large $n$, we can change variables and assume that all $a_i\in k[[t]]$.   Let $r_k = \text{ord}(a_k)$,
so that $a_k = t^{r_k}u_k(t)$, where $u_k(t)$ is a unit. Here ``ord" denotes the $t$-adic order of an element. 

We want to factor $P(z)$, and to do so we will use Hensel's lemma\footnote{ The version of Hensel's lemma
we use is the following theorem:
\begin{thm} Let $(R,\m)$ be a local ring which is complete in the $\m$-adic topology. If $f(T)$ is a monic polynomial
with coefficients in $R$ such that $f(T)$ factors modulo $\m$ into the product of two relatively prime monic
polynomials of positive degree, then this factorization lifts to a factorization of $f(T)$ into two monic polynomials.
\end{thm}}; it suffices to factor $P(z)$ into two relatively prime polynomials
after going modulo $t$. This is always possible unless after reduction mod $t$, $P(z)$ becomes of the form $(t-\alpha)^n$ for
some $\alpha\in k$. However, since $a_1 = 0$, the only possible such $\alpha$ is $0$. Thus we are done unless for every $k$, $2\leq k\leq n$,
$a_i(0) = 0$.  We want to make a change of variables where this does not occur. 

Set $z = t^ry$, for some rational $r$ to be chosen later. Substituting, we see that
$P(z) = t^{rn}y^n + a_2t^{r(n-2)}y^{n-2} + ... + a_n$. We wish to factor out $t^{rn}$ from every term in such a way that
at least one term becomes a unit. The power of $t$ dividing
the $i$th term is $t^{r_i}t^{(n-i)r}$, so what we need to do is to choose $r$ in such a way that 
$r_i + (n-i)r \geq rn$, i.e., so that $r_i\geq ri$. We set $r = \text{min}\{\frac {r_i}{i}\}$. Then we can rewrite
$P(z) = t^{rn}Q(y,t)$, where $Q = y^n + b_2y^{n-1} + ... + b_n$. By again replacing $t$ by $t^{1/m}$ for suitably large $m$,
we can assume that each $b_j\in k[[t]]$. But now for at least one $b_i$, $b_i(0)\ne 0$; this occurs for every term where
$r_i/i = r$. It follows that we can factor $Q$ over the residue field into relatively prime polynomials, and by Hensel's Lemma
this lifts to a factorization of $Q$ and hence also of $P$. 
\end{proof}

This theorem has the following almost immediate corollary:

\begin{cor} Let $k$ be an algebraically closed field of characteristic $0$. Then $k[[t]]^+ = \cup_nk[[t^{1/n}]]$.
\end{cor}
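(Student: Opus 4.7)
The plan is to prove the two inclusions separately, leveraging the Newton--Puiseux theorem for the harder direction and the general fact that a power series ring in one variable over a field is integrally closed for the easier one.

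For the inclusion $\bigcup_n k[[t^{1/n}]] \subseteq k[[t]]^+$, I would observe that $t^{1/n}$ satisfies the monic polynomial $X^n - t \in k[[t]][X]$, so $t^{1/n}$ is integral over $k[[t]]$. Since the set of elements of $k((t^{1/n}))$ that are integral over $k[[t]]$ forms a ring, and since $k[[t^{1/n}]]$ is generated as a $k[[t]]$-algebra by the single integral element $t^{1/n}$ together with the coefficients in $k$, every element of $k[[t^{1/n}]]$ is integral over $k[[t]]$. Moreover every such element lies in $k((t^{1/n})) \subseteq \overline{k((t))}$, so it lies in $k[[t]]^+$.

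For the reverse inclusion $k[[t]]^+ \subseteq \bigcup_n k[[t^{1/n}]]$, I would take an arbitrary $\alpha \in k[[t]]^+$. By definition $\alpha$ is algebraic over $k((t))$, so by Theorem~\ref{newton} we have $\alpha \in k((t^{1/n}))$ for some $n \geq 1$. Now $\alpha$ is integral over $k[[t]]$, hence a fortiori integral over the overring $k[[t^{1/n}]]$. The ring $k[[t^{1/n}]]$ is a formal power series ring in one variable over a field, hence integrally closed (as noted in Section 2), and its fraction field is exactly $k((t^{1/n}))$, which contains $\alpha$. Therefore $\alpha \in k[[t^{1/n}]]$, completing the proof.

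There is no real obstacle here; the argument is essentially an unwinding of definitions once Theorem~\ref{newton} is in hand. The only point to watch is the switch from ``integral over $k[[t]]$'' to ``integral over $k[[t^{1/n}]]$'', which is automatic since $k[[t]] \subseteq k[[t^{1/n}]]$ and a monic relation over the smaller ring is also a monic relation over the larger one. The essential content of the corollary is thus that passing from ``algebraic over the fraction field'' to ``integral over the ring'' costs nothing in this setting, precisely because every intermediate field $k((t^{1/n}))$ is the fraction field of a single, integrally closed valuation ring $k[[t^{1/n}]]$.
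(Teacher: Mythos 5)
Your proof is correct and takes essentially the same approach as the paper: both arguments rest on the Newton--Puiseux theorem to place any integral element inside some $k((t^{1/n}))$, and on the integral closedness of the DVR $k[[t^{1/n}]]$ to pull it down into $k[[t^{1/n}]]$. The paper phrases the second step structurally (a directed union of integrally closed domains is integrally closed) while you unwind it element by element, but the key ideas are identical.
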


\begin{proof} The ring on the right side of the equation is clearly integral over $k[[t]]$, and the fraction field
of it is clearly the Newton-Puiseux field, which is an algebraic closure of $k((t))$.
To finish the proof of the corollary, we need to prove that $\cup_nk[[t^{1/n}]]$ is integrally closed. But since
it is a union of discrete valuation rings, each integrally closed, it is also. \end{proof}

\begin{example} {\rm As an example, consider the
equation $X^2 - X = t^{-1}$. Using the quadratic formula and
Taylor series shows that the roots of this polynomial
are a power series in $t^{-1/2}$. One can solve recursively for the
coefficients. }
\end{example}
\medskip

The situation in positive characteristic is drastically different. We use the discussion in the  paper \cite{K} in what follows.
 A \it generalized power series \rm is an
expression of the form, $\sum_{i\in \mathbb Q} c_it^i$, with $c_i\in k$, such that the set of $i$ with $x_i\ne 0$
is a well-ordered subset of $\mathbb Q$, i.e., every non-empty subset has a least element. Such generalized power
series form a ring in a natural way; it makes sense to multiply and add them in the obvious way.
Abyhankar pointed out that with this generalization of the idea of a power series,
the Chevalley polynomial has the root $t^{-1/p} + t^{-1/p^2}+...$.

Are there enough generalized power series to obtain an algebraic closure of $k((t))$?  The following result was proved independently
by Huang \cite{Hua}, Rayner \cite{Ray}, and \c Stef\u anescu \cite{St}:

\begin{thm} Let $k$ be an algebraically closed field of positive characteristic, and let $K$ be the set of generalized power
 series of the form $f = \sum_{i\in S} c_it^i$,
with $c_i\in k$, where the set $S$ has the following properties:
\begin{enumerate}[\quad\rm (1)]
\item  $S$ is a subset of $\mathbb Q$ which depends on $f$.
\item  Every nonempty subset of $S$ has a least element.
\item  There exists a natural number $m$ such that every element of $mS$ has denominator a power of $p$.
\end{enumerate}
Then $K$ is an algebraically closed field containing $k((t))$.
\end{thm}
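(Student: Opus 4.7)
The plan is to verify three things in order: that $K$ is a ring, that $K$ is a field, and — the substantial part — that $K$ is algebraically closed.

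For the ring and field structure, given $f, g \in K$ I take $m$ to be a common denominator-bound (the product of the $m$'s for $f$ and $g$). The support of $f+g$ lies in $\mathrm{supp}(f) \cup \mathrm{supp}(g)$, which is a union of two well-ordered subsets of $\mathbb{Q}$ and hence well-ordered. For multiplication I invoke the standard fact that the sumset $\mathrm{supp}(f)+\mathrm{supp}(g)$ of two well-ordered subsets of an ordered abelian group is again well-ordered, with each element realized only finitely often, so the coefficients of $fg$ are well-defined finite sums. Condition $(3)$ is clearly preserved in both cases. For inversion of a nonzero $f$ with least support element $\alpha$, factor $f = c t^{\alpha}(1 - g)$ with $c \in k^{\times}$ and $g$ having well-ordered support contained in $\mathbb{Q}_{>0}$; then $f^{-1} = c^{-1} t^{-\alpha} \sum_{n \geq 0} g^{n}$ makes sense, because the set of finite sums of elements from a positive well-ordered subset of $\mathbb{Q}$ is again well-ordered with finite fibers — the standard ingredient from the theory of Hahn series. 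Containment $k((t)) \subseteq K$ is immediate: a Laurent series has support in some $\mathbb{Z}_{\geq N}$, which is well-ordered and satisfies $(3)$ with $m = 1$.

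For algebraic closure — the main obstacle — I would run a Newton-polygon induction that accounts for the Artin-Schreier phenomenon absent in characteristic zero. Given monic $P(z) \in K[z]$ of degree $n$, first apply a Tschirnhausen substitution to kill the $z^{n-1}$ term, then use the slopes of the Newton polygon of $P$ to choose a rational $r$ such that the substitution $z = t^{r} y$, after dividing by a suitable power of $t$, yields a polynomial with non-negative-exponent coefficients whose reduction modulo positive exponents is a nonconstant polynomial over $k$. When that reduction has at least two distinct roots, Hensel's lemma splits $P$ into two factors of lower degree and one inducts as in Theorem~\ref{newton}. The hard case is when the reduction equals $(y - \alpha)^{n}$, so Hensel gives nothing; here one substitutes $y \mapsto \alpha + y'$ and iterates. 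In characteristic $0$ this process must terminate after finitely many steps, but in characteristic $p$ it need not — the Chevalley polynomial $x^{p} - x - t^{-1}$ is exactly such an example, producing the infinite series $t^{-1/p} + t^{-1/p^{2}} + \cdots$.

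The real heart of the proof is to show that the (possibly infinite) iteration nevertheless yields a root lying in $K$: the sequence of exponents accumulated must form a well-ordered subset of $\mathbb{Q}$, and the denominators introduced at each stage must remain controlled so that a single fixed $m$ suffices for condition $(3)$. For this one separates the ``tame'' steps, where the Newton slope has denominator prime to $p$, from the ``wild'' Artin-Schreier-type steps, where the slope contributes a $p$-power denominator. The tame adjustments can only introduce denominators dividing a fixed integer bounded in terms of $n$, while the wild adjustments contribute only powers of $p$; together with a bookkeeping argument controlling the growth of the exponent sequence after each wild step, this yields the required well-ordered support satisfying $(3)$. I would follow the detailed presentation in \cite{K}, or the original treatments in \cite{Hua}, \cite{Ray}, and \cite{St}.
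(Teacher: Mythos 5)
The paper does not actually prove this theorem: it states it as a result of Huang, Rayner, and \c Stef\u anescu and refers the reader to \cite{Hua}, \cite{Ray}, \cite{St} (and to \cite{K} for the surrounding discussion). So there is no in-paper proof to compare against, and the relevant question is only whether your sketch would stand on its own. Your treatment of the algebraic structure is fine: the Hahn-series facts you invoke (Neumann's lemma that sumsets of well-ordered subsets of an ordered group are well-ordered with finite fibers, the geometric-series inversion, the closure of the denominator condition (3) under these operations) are standard and correctly applied, and the inclusion $k((t))\subset K$ is immediate.

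The gap is precisely where you flag ``the real heart of the proof.'' Asserting that ``a bookkeeping argument controlling the growth of the exponent sequence after each wild step'' yields a well-ordered support satisfying (3) is not a proof --- it \emph{is} the theorem. To make this an argument you would need, at minimum: (i) a construction of the root as a (possibly transfinite) iteration, with a limit-ordinal step showing the partial sums converge in the valuation topology and that the process terminates at some countable ordinal; (ii) a proof that the Newton slopes encountered contribute only boundedly many primes other than $p$ to the denominators, each to bounded power, so that a single $m$ works --- your claim that tame slopes have denominator bounded ``in terms of $n$'' is plausible via ramification-index considerations but is not established, and is complicated by the fact that the coefficients of the input polynomial lie in $K$ rather than $k((t))$, so they themselves may already have transfinite support; and (iii) an argument that the increasing exponents actually march to $+\infty$, or else that the partial limit can be peeled off and the process restarted, which is where the tame/wild separation must do real work. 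None of this is carried out; you explicitly defer it to \cite{K}, \cite{Hua}, \cite{Ray}, \cite{St}. Since the paper makes the same deferral, this does not contradict the paper, but as a self-contained proof proposal it stops short exactly at the step that makes the theorem nontrivial.
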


Kedlaya \cite{K} gives a construction  of the algebraic closure of $k((t))$ when $k$ is algebraically closed of positive characteristic.
By the above theorem, one needs to identify generalized power series which are algebraic over $k((t))$. 
His result is quite complicated to even describe, but a glimpse of the issues which arise can be seen
in the following result of Huang \cite{Hua} and \c Stef\u anescu \cite{St}:

\begin{thm} The series $\sum_{i = 0}^{\infty} c_it^{-1/p^i} $ with $c_i\in \overline{F_p}$, the
algebraic closure of the field with $p$ elements, is algebraic over $\overline{F_p}((t))$ if and only if 
the sequence $\{c_i\}$ is eventually periodic.
\end{thm}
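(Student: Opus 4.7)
Split $\alpha = h + \tau$ with $h = \sum_{i<N} c_i t^{-1/p^i}$ and $\tau = \sum_{i\ge N} c_i t^{-1/p^i}$ the purely periodic tail of period $m$. Each summand of $h$ is a root of $x^{p^i}-c_i^{p^i}t^{-1}$, so $h$ is algebraic. For $\tau$, put $s = t^{1/p^N}$ so that $\tau = \sum_{j\ge 0} c_{j+N}\, s^{-1/p^j}$, and choose $n$ with all $c_i\in\mathbb{F}_{p^n}$. Since Frobenius distributes across Hahn sums in characteristic $p$, and $c_i^{p^{mn}} = c_i$, we have $\tau^{p^{mn}} = \sum_{j\ge 0} c_{j+N}\,s^{-p^{mn-j}}$. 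The indices $j\le mn$ contribute a Laurent polynomial in $s^{-1}$; the rest reassembles, by periodicity, into $\tau - c_N s^{-1}$. Thus $\tau^{p^{mn}} - \tau$ is a Laurent polynomial in $s^{-1}$, an Artin--Schreier-type equation showing $\tau$ is algebraic over $\mathbb{F}_{p^n}((s))$, which is a finite extension of $\mathbb{F}_{p^n}((t))$; hence so is $\alpha$.

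\textbf{Reverse direction.} The minimal polynomial of $\alpha$ over $\overline{\mathbb{F}_p}((t))$ involves only finitely many scalars, so $\alpha$ is algebraic over some $\mathbb{F}_{p^n}((t))$. The absolute Galois group of $\mathbb{F}_{p^n}$ acts on Hahn series coefficientwise; since the orbit of $\alpha$ is contained in the finite conjugate set, all $c_i$ lie in a common finite field, which I take to be $\mathbb{F}_{p^n}$ after enlarging $n$. Now introduce the shift operator $T\bigl(\sum d_i t^{-1/p^i}\bigr) = \sum d_{i+n}\, t^{-1/p^i}$. A computation analogous to the forward direction gives $\alpha^{p^n} = P(t^{-1}) + T(\alpha)$ for a Laurent polynomial $P$, so $T(\alpha) \in L := \mathbb{F}_{p^n}((t))[\alpha]$, a fixed finite extension. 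Iterating, every $T^k(\alpha)\in L$, and the eventual periodicity of $\{c_i\}$ is equivalent to $T^{k_1}(\alpha) = T^{k_2}(\alpha)$ for some $k_1 < k_2$, i.e., to the orbit being finite.

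\textbf{Main obstacle.} The crux is proving the orbit $\{T^k(\alpha)\}$ is finite. The first step is a valuation bound: $L$ is a complete discretely-valued field with finite residue field and some ramification index $e$ over $\mathbb{F}_{p^n}((t))$, and since the Hahn valuation $v(T^k(\alpha)) = -1/p^{j_k}$ of each orbit element must belong to $\tfrac{1}{e}\mathbb{Z}$, we obtain $p^{j_k} \mid e$ and $v_L(T^k(\alpha)) \ge -e$, confining the orbit to the compact set $\pi^{-e}\mathcal{O}_L$. The genuinely delicate step is showing that within this compact region, the elements of $L$ admitting an expansion $\sum d_i t^{-1/p^i}$ with $d_i\in\mathbb{F}_{p^n}$ form a finite set. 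One would expand $\beta\in L$ in the basis $\{1,\alpha,\ldots,\alpha^{d-1}\}$ as $\sum_j f_j(t)\alpha^j$ and argue that the requirement that the resulting Hahn expansion have support only in $\{-1/p^i\}$ forces each $f_j$ into a finite set of Laurent polynomials in $t^{-1}$ via an exponent-compatibility analysis (illustrated by the Artin--Schreier case $\alpha^p - \alpha = t^{-1}$, where this set has exactly $p^2$ elements). Once this finiteness is in hand, pigeonhole yields $T^{k_1}(\alpha) = T^{k_2}(\alpha)$ for some $k_1 < k_2$, completing the proof.
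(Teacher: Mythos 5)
The paper does not actually prove this theorem; it is quoted from Huang and \c Stef\u anescu without proof, so there is no in-paper argument to compare against. Reviewing your proposal on its own merits: your forward direction is correct. Writing the periodic tail as $\tau=\sum_{j\ge 0} c_{j+N}s^{-1/p^j}$ with $s=t^{1/p^N}$ and computing $\tau^{p^{mn}}-\tau$ via the additivity of Frobenius on Hahn series does indeed yield a Laurent polynomial in $s^{-1}$, giving an Artin--Schreier-type relation over $\mathbb{F}_{p^n}((s))$, a finite extension of $\mathbb{F}_{p^n}((t))$.

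The reverse direction has a gap at its very first step, independently of the ``Main obstacle'' you flag. You assert that the minimal polynomial of $\alpha$ over $\overline{\mathbb{F}_p}((t))$ ``involves only finitely many scalars, so $\alpha$ is algebraic over some $\mathbb{F}_{p^n}((t))$.'' But a single coefficient of that minimal polynomial is itself a Laurent series whose own coefficients may run through an infinite subset of $\overline{\mathbb{F}_p}$; the field $\overline{\mathbb{F}_p}((t))$ is an enormous (indeed transcendental) extension of $\bigcup_n \mathbb{F}_{p^n}((t))$, and algebraicity over the former does not formally descend to algebraicity over any $\mathbb{F}_{p^n}((t))$. The Galois-orbit argument that follows is therefore circular: for ${\rm Gal}(\overline{\mathbb{F}_p}/\mathbb{F}_{p^n})$ to permute $\alpha$ among finitely many conjugates you need the minimal polynomial to have coefficients in $\mathbb{F}_{p^n}((t))$, which is exactly what you are trying to show. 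This reduction is true \emph{a posteriori} (as a consequence of the theorem, since an eventually periodic $(c_i)$ takes finitely many values), but it cannot be assumed as a first step. Compounding this, the shift identity $\alpha^{p^n}=P(t^{-1})+T(\alpha)$ that drives the rest of the argument already presupposes $c_i^{p^n}=c_i$ for all $i$, i.e.\ the very conclusion of that unproved reduction. Finally, as you acknowledge, the key finiteness claim in the ``Main obstacle'' paragraph --- that elements of $L$ in $\pi^{-e}\mathcal{O}_L$ admitting an expansion $\sum d_it^{-1/p^i}$ with $d_i\in\mathbb{F}_{p^n}$ form a finite set --- is only sketched, not proved. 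So the reverse direction is incomplete on two counts: the unjustified reduction to a finite coefficient field at the start, and the open finiteness step at the end.
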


A consequence of the main result of \cite{K} is the following nice theorem, which perhaps can be
proved directly.

\begin{thm} Let $k$ be an algebraically closed field of positive characteristic, and let $\sum_i c_it^i$ be a generalized power series which is algebraic over $k((t))$. Then for
every real number $\alpha$,  $\sum_{i < \alpha} c_it^i$ is also algebraic over $k((t))$.
\end{thm}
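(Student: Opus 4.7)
The plan is to extract this from Kedlaya's structural description \cite{K} of the algebraic closure $\overline{k((t))}$ in positive characteristic. That description says, roughly, that a generalized power series is algebraic over $k((t))$ precisely when its coefficient sequence is ``twist-recurrent,'' built inductively from the Puiseux expansion together with iterated Artin--Schreier ``corrections.'' Truncation at a real threshold should respect this recursive structure, and that is what needs to be checked.

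More concretely, I would proceed as follows. First, after replacing $t$ by $t^{1/m}$ for a suitable $m$, use that the finite separable extension $k((t))(f)/k((t))$ decomposes into a tame part (captured in the Puiseux field $k((t^{1/m}))$) and a wild part (captured by an Artin--Schreier--Witt tower). This produces a chain $k((t^{1/m})) = L_0 \subset L_1 \subset \cdots \subset L_r \ni f$ with each $L_j = L_{j-1}(y_j)$ and $y_j^p - y_j = a_j \in L_{j-1}$, and expresses $f$ as a polynomial in $y_1,\ldots,y_r$ with Puiseux coefficients. The Puiseux part is a finite sum of rational-exponent monomials and is trivially truncation-closed. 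For each generator $y_j$, write $y_j = g_j + h_j$ with $g_j = \pi_\alpha(y_j)$ and $\mathrm{supp}(h_j) \subset [\alpha,\infty)\cap \mathbb{Q}$; since Frobenius is additive in characteristic $p$, the defining relation becomes
\begin{equation*}
g_j^p - g_j + h_j^p - h_j = a_j.
\end{equation*}
Using that $\pi_\alpha(a_j)$ is algebraic (by an inductive hypothesis on $j$) and comparing supports across the threshold $\alpha$, one tries to produce a polynomial equation for $g_j$ over the algebraic extension already constructed, and then assemble the truncations into a polynomial equation for $\pi_\alpha(f)$.

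The main obstacle is this support-comparison step, because Frobenius does not commute with truncation in the naive sense: $(\pi_\alpha(y))^p$ has support in $(-\infty,p\alpha)$, not in $(-\infty,\alpha)$. When $\alpha > 0$ this is a genuine mismatch, since $p\alpha > \alpha$, so one cannot simply read off from the displayed identity a polynomial equation satisfied by $g_j$ alone. Overcoming this requires the finer recursive information supplied by Kedlaya, namely that each coefficient of $y_j$ at a given exponent is determined by finitely many coefficients at strictly larger exponents. This allows one to ``solve for'' the infinite tail above $\alpha$ independently and peel it off, leaving behind a finite algebraic relation for $g_j$. The well-ordering of the support of $y_j$ is what keeps this iteration from running forever; it is also the reason the author hedges by saying the result ``perhaps can be proved directly,'' since a proof avoiding Kedlaya's machinery seems to require re-deriving enough of the twist-recurrent structure on one's own.
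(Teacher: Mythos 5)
The paper itself offers no proof here; it records the theorem as ``a consequence of the main result of \cite{K}'' and notes that it ``perhaps can be proved directly.'' There is no argument in the paper to compare against, so your sketch has to be judged as an attempt at the hinted-at direct proof.

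The Artin--Schreier--Witt framing is a sensible start, and you are right that the tame (Puiseux) part is truncation-closed for free. But the obstacle you flag at the wild step is the crux, and the sketch as written does not get past it. From $g_j^p - g_j + h_j^p - h_j = a_j$ you cannot isolate a polynomial relation for $g_j = \pi_\alpha(y_j)$, because $g_j^p$ spills its support up to $p\alpha$, into the range occupied by $h_j$ and $h_j^p$. The principle you attribute to Kedlaya to fix this, that ``each coefficient of $y_j$ at a given exponent is determined by finitely many coefficients at strictly larger exponents,'' is also not right as stated: from $y^p - y = a$ one gets $c_{e/p}^p - c_e = a_e$, so $c_e$ is determined by $c_{e/p}$, and $e/p > e$ when $e<0$ but $e/p < e$ when $e>0$; the direction of determination flips at $0$, so the peel-off you describe does not run uniformly across the support. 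There is also a further issue you pass over: $f$ is a polynomial in the $y_j$ with Puiseux coefficients, and truncation does not commute with multiplication, so knowing $\pi_\alpha(y_j)$ for each $j$ does not by itself determine $\pi_\alpha(f)$; tails of the $y_j$ above $\alpha$ still contribute to low-order coefficients of the products. A complete direct proof would have to manage all of this.

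The derivation the author presumably intends sidesteps the tower entirely. Kedlaya's main theorem characterizes the algebraic generalized power series intrinsically, by a $p$-adic constraint on the support together with a Frobenius-semilinear (``twist'') recurrence on the coefficient sequence, and one then checks that deleting every term of exponent $\geq\alpha$ preserves both conditions. That check is the whole content of ``a consequence of Kedlaya's main result'' and is far lighter than re-deriving the recurrence from an Artin--Schreier tower, which is what your sketch is in effect attempting.
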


We now move away from the local case. 
Another classical result in dimension one concerns the ring of all algebraic integers, $\mathbb Z^+$.
To prove this theorem, we first recall some results about
class groups. 

Let $D$ be a ring of algebraic integers. This means that the fraction field $K$ of $D$ is a finite
extension of $\mathbb Q$, and $D$ is the integral closure of $\mathbb Z$ in $K$. Necessarily
$D$ is a one-dimensional integrally closed domain, i.e., a Dedekind domain. Moreover, a classical
result is that $D$ has a torsion class group.  In particular this means that every ideal $I$ in
$D$ has a power which is a principal ideal. This fact leads to the following theorem
about $\mathbb Z^+$.

\begin{thm}\label{bezout} Every finitely generated ideal of
$\mathbb Z^+$ is principal (a domain with this property is said to be a B\'ezout domain).
\end{thm}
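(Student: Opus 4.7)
The plan is to reduce the problem to a single ring of algebraic integers, where we can invoke the torsion of the class group, and then extract a generator by adjoining a suitable root in $\mathbb{Z}^+$.

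First, given a finitely generated ideal $I = (a_1,\ldots,a_n)\mathbb{Z}^+$, I would observe that the coefficients of the integrality equations satisfied by $a_1,\ldots,a_n$ generate a number field $K$, and all $a_i$ lie in $D := \mathcal{O}_K$, the integral closure of $\mathbb{Z}$ in $K$. So $D$ is a Dedekind domain and $J := (a_1,\ldots,a_n)D$ is a finitely generated ideal of $D$. If $J = 0$ we are done, so assume $J \ne 0$. By the finiteness (torsion) of the class group of $D$, there is a positive integer $m$ and an element $c \in D\setminus\{0\}$ with $J^m = (c)D$.

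Next, choose $\alpha \in \mathbb{Z}^+$ with $\alpha^m = c$; this is possible because $\mathbb{Z}^+$ is integrally closed and algebraically closed under taking roots of monic polynomials over $D$. I claim $I = \alpha \mathbb{Z}^+$. For one inclusion, each $a_i^m \in J^m = (c) = (\alpha^m)$, so $a_i^m = \alpha^m d_i$ with $d_i \in D$. Setting $\beta_i := a_i/\alpha$ in the fraction field gives $\beta_i^m = d_i \in D$, so $\beta_i$ is integral over $D$, hence $\beta_i \in \mathbb{Z}^+$ and $a_i = \alpha\beta_i \in \alpha\mathbb{Z}^+$.

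For the reverse inclusion, I would use that $c \in J^m$ means
\[
\alpha^m \;=\; c \;=\; \sum_{|I|=m} \lambda_I\, a^I, \qquad \lambda_I \in D,
\]
where $a^I = a_1^{i_1}\cdots a_n^{i_n}$ for the multi-index $I=(i_1,\ldots,i_n)$. Substituting $a_j = \alpha \beta_j$ gives $\alpha^m = \alpha^m \sum_I \lambda_I \beta^I$, hence $1 = \sum_I \lambda_I\beta^I$ in $\mathbb{Z}^+$. For each multi-index $I$ with $|I|=m \ge 1$, pick some $j=j(I)$ with $i_j \ge 1$ and write $\beta^I = \beta^{I - e_{j}}\beta_j$. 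Then
\[
\alpha \;=\; \alpha \sum_I \lambda_I \beta^I \;=\; \sum_I \lambda_I\, \beta^{I - e_{j(I)}}\,(\alpha\beta_{j(I)}) \;=\; \sum_I \lambda_I\,\beta^{I - e_{j(I)}}\,a_{j(I)} \;\in\; I,
\]
since every $\beta^{I - e_{j(I)}} \in \mathbb{Z}^+$. Thus $\alpha \in I$, proving $I = \alpha\mathbb{Z}^+$.

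The main obstacle, and the crucial trick, is the reverse inclusion $\alpha \in I$: knowing only that $\alpha^m \in J^m$ is not formally enough to conclude $\alpha \in J \mathbb{Z}^+$, and one has to use the explicit expression of $c$ as a $D$-combination of degree-$m$ monomials in the $a_i$ together with the fact that each ratio $a_j/\alpha$ remains integral and hence lies in $\mathbb{Z}^+$. Everything else — gathering the generators into a single Dedekind ring and invoking finiteness of the class group — is standard algebraic number theory.
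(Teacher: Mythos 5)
Your argument is correct, and the overall skeleton matches the paper's: absorb the generators into a ring of integers $D$, use torsion of the class group to write $J^m=(c)$, extract an $m$th root $\alpha=c^{1/m}\in\mathbb{Z}^+$, and show $I=\alpha\mathbb{Z}^+$. Where you diverge is in the verification of the two inclusions. The paper passes to a larger Dedekind extension $T$ of $D$ (finite over $\mathbb{Z}$, containing $d^{1/n}$ and the coefficients expressing a given $i\in I$) and invokes unique factorization of ideals in $T$ to get $JT=P_1^{a_1}\cdots P_k^{a_k}$, $dT=P_1^{na_1}\cdots P_k^{na_k}$, hence $d^{1/n}T=JT$, from which both inclusions drop out. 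You instead avoid the ideal-factorization machinery entirely: for $I\subseteq\alpha\mathbb{Z}^+$ you observe $a_i^m=\alpha^m d_i$ with $d_i\in D$, so $\beta_i=a_i/\alpha$ has $\beta_i^m\in D$ and is therefore integral, hence in $\mathbb{Z}^+$, and you only need to check the generators; for $\alpha\in I$ you expand $c$ as a $D$-combination of degree-$m$ monomials in the $a_i$, cancel $\alpha^m$, and multiply back through by $\alpha$, pulling a single $a_{j(I)}$ out of each monomial. Your route is somewhat more elementary and also sidesteps the minor implicit point in the paper's proof that $T$ must be chosen to contain $d^{1/n}$; the paper's version is a bit slicker in that the single equality $d^{1/n}T=JT$ handles both directions at once, but both are clean proofs of the same fact.
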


\begin{proof} Let $I\subset \mathbb Z^+$ be generated by $t_1,...,t_m$. There is a finite field extension $L$
of $\mathbb Q$   containing all of these elements. The integral closure of $\mathbb Z$
in $L$, say $D$, is a Dedekind domain which contains $t_1,...,t_m$. Let $J$ be the ideal they
generate in $D$. By the discussion above, some power of $J$ is principal, say $J^n = (d)$.
We claim then that $d^{\frac{1}{n}}\mathbb{Z}^+ = I$. Let $i\in I$. We can write $i = \sum_j e_jt_j$, where the
$e_j\in \mathbb{Z}^+$. Again, there is a finite extension $T$ of $D$, with $T$ a Dedekind domain, such that 
$i\in I\cap T = (t_1,...,t_m)T = JT$. In a Dedekind domain there is unique factorization into prime ideals.
Write $JT = P_1^{a_1}\cdots P_k^{a_k}$. Then $J^nT = dT = P_1^{na_1}\cdots P_k^{na_k}$. By unique factorization,
it follows that $d^{\frac{1}{n}}T = P_1^{a_1}\cdots P_k^{a_k} = JT$, so that $i\in d^{\frac{1}{n}}\mathbb{Z}^+$.
\end{proof}

\bigskip
\bigskip
\section{Regular Sequences}

\medskip

We summarize some of the basic notions we will use to analyze $R^+$. 

\begin{definition}{\rm A sequence of elements $x_1,...,x_d$ in a ring $R$ is said to be
a \it regular sequence \rm if $rx_i\in (x_0,...,x_{i-1})$ implies that
$r\in (x_0,...,x_{i-1})$ for all $1\leq i\leq d-1$ (here we set $x_0 = 0$), and
$(x_1,...,x_d)R \ne R$.}
\end{definition}

Another definition we will need is the following.

\begin{definition} {\rm Let $(R,\m)$ be a Noetherian local ring of dimension $d$. Elements
$x_1,...,x_d$ are said to be a \it system of parameters \rm if the nilradical
of the ideal they generate is $\m$.
A Noetherian local ring is said to be \it Cohen-Macaulay \rm if some (equivalently) every
system of parameters forms a regular sequence.}
\end{definition}

We are aiming at a theorem which shows that in characteristic $p$, $R^+$ has regular sequences having
maximal length. It turns out that this is equivalent to being flat in an appropriate sense. The next proposition
proves this.

\medskip

\begin{prop} Let $A = k[[x_1,...,x_d]]$, where $k$ is a field of characteristic $p > 0$. Then the following two conditions
are equivalent:

(1) $x_1,...,x_d$ form a regular sequence on $A^+$ (by an abuse of language we say that $A^+$ is Cohen-Macaulay).
\newline
(2) $A^+$ is flat over $A$.
\end{prop}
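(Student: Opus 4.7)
The implication $(2) \Rightarrow (1)$ is essentially formal. Since $A$ is regular of dimension $d$ with the $x_i$ generating $\m$, the sequence $\ux = x_1,\ldots,x_d$ is a regular sequence on $A$, and flat base change preserves regular sequences, so $\ux$ remains regular on $A^+$. Properness of $\m A^+$ follows from the integrality of $A^+$ over $A$: lying over produces a maximal ideal of $A^+$ contracting to $\m$, which contains $\m A^+$.

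For the main implication $(1) \Rightarrow (2)$, the plan is to exploit the regularity of $A$ via the Koszul complex. Since $\ux$ is a regular sequence in $A$ generating $\m$, the Koszul complex $K_\bullet(\ux; A)$ is a free $A$-resolution of the residue field $k = A/\m$, and therefore
\[
\Tor_i^A(k, A^+) \;=\; H_i\bigl(K_\bullet(\ux; A)\otimes_A A^+\bigr) \;=\; H_i\bigl(K_\bullet(\ux; A^+)\bigr).
\]
By hypothesis $(1)$, $\ux$ is a regular sequence on $A^+$, so the Koszul homology on the right vanishes in positive degrees, yielding $\Tor_i^A(k, A^+) = 0$ for all $i \geq 1$.

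The main obstacle is to promote this Tor-vanishing against $k$ to full flatness of $A^+$, given that $A^+$ is far from finitely generated over $A$. I would invoke (or reprove) the standard fact that over a regular local ring $A$, vanishing of $\Tor_1^A(k, M)$ alone suffices for an $A$-module $M$ to be flat. One route is Noetherian descending induction on primes $\p \subset A$, establishing $\Tor_i^A(A/\p, M) = 0$ for all $i \geq 1$. The base case $\p = \m$ is the Koszul calculation above; the inductive step, for a chosen $x \in \m \setminus \p$, feeds the short exact sequence
\[
0 \to A/\p \xrightarrow{\,x\,} A/\p \to A/(\p + xA) \to 0
\]
into the long exact Tor sequence, together with a prime filtration of the cokernel (whose associated primes strictly contain $\p$) and the finite global dimension of the regular ring $A$, to force the vanishing down from high homological degree. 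Since every finitely generated $A$-module admits a prime filtration, this extends to $\Tor_1^A(N, A^+) = 0$ for every f.g.\ $N$, which is the standard criterion for $A^+$ to be flat over the Noetherian ring $A$.
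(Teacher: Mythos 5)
Your $(2)\Rightarrow(1)$ argument is fine, and so is the Koszul computation showing $\Tor_i^A(k,A^+)=0$ for all $i\geq 1$. The gap is in the step that upgrades this $\Tor$-vanishing against $k$ to flatness of $A^+$. The ``standard fact'' you invoke --- that over a regular local ring $A$, the vanishing of $\Tor_1^A(k,M)$ alone forces an arbitrary $A$-module $M$ to be flat --- is simply false without a separatedness hypothesis on $M$ (the local criterion of flatness needs $M$ to be, e.g., finitely generated over some Noetherian local $A$-algebra). A concrete counterexample: take $A=k[[x,y]]$ and $M=k((y))$, regarded as an $A$-module via $A\to A/(x)=k[[y]]\hookrightarrow k((y))$, so $x$ acts as zero and $y$ acts invertibly. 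Tensoring the Koszul resolution of $k$ with $M$ gives $\Tor_i^A(k,M)=0$ for every $i$ (each differential becomes injective or surjective because $y$ is a unit on $M$), yet $\Tor_1^A(A/(x),M)\cong M\neq 0$, so $M$ is not flat. Your proposed Noetherian induction runs into exactly this: from the long exact sequence you learn only that multiplication by $x\in\m\setminus\p$ is bijective on $\Tor_j^A(A/\p,M)$ for all $j\geq 1$, which merely makes each $\Tor_j^A(A/\p,M)$ a vector space over the residue field of $\p$. It does not kill it, as the example above shows. Finite global dimension gives $\Tor_j=0$ for $j>d$, but the long exact sequence has no mechanism to push that vanishing down to $j=1$; an $x$-isomorphism in each degree is compatible with all the $\Tor_j$ being nonzero at once.

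The paper avoids this by exploiting hypothesis $(1)$ far more than just through $\Tor_i(k,A^+)=0$. The key observation is that for \emph{any} system of parameters $y_1,\ldots,y_d$ of $A$, the subring $B=k[[y_1,\ldots,y_d]]$ is again a power series ring with $B^+=A^+$, so $(1)$ applied to $B$ says $y_1,\ldots,y_d$ is a regular sequence on $A^+$; in particular, every \emph{partial} system of parameters is a regular sequence on $A^+$. Given that, the paper chooses $i\geq 1$ maximal with $\Tor_i^A(A/P,A^+)\neq 0$ for some prime $P$ (possible because $A$ has finite global dimension), takes a maximal $A$-regular sequence $y_1,\ldots,y_s$ in $P$ (which is part of a system of parameters since $A$ is Cohen--Macaulay), and uses the embedding $A/P\hookrightarrow A/(y_1,\ldots,y_s)$ with cokernel $C$. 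In the long exact sequence, $\Tor_{i+1}^A(C,A^+)$ vanishes by maximality of $i$ and a prime filtration of $C$, while $\Tor_i^A(A/(y_1,\ldots,y_s),A^+)$ vanishes because $y_1,\ldots,y_s$ is a regular sequence on $A^+$, forcing $\Tor_i^A(A/P,A^+)=0$, a contradiction. The essential extra input your argument never uses is this supply, for every prime $P$, of a regular sequence on $A^+$ lying inside $P$ and whose quotient receives an embedding of $A/P$.
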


\begin{proof}
We prove the equivalence. First assume (1).
If $A^+$ is not flat over $A$, choose $i\geq 1$ as large as possible so that
$\Tor_i^A(A/P, A^+)\ne 0$ for some prime $P$ in $A$. Such a choice is possible because $A$ is regular
and large Tors vanish. If $y_1,...,y_s$ is a maximal regular sequence
in $P$, then one can embed $A/P$ in $A/(y_1,...,y_s)$ with cokernel $C$. But since our assumption
forces $\Tor_{i+1}^R(C,A^+) =  0$ (as $C$ has a prime filtration), and $y_1,...,y_s$ form a
regular sequence on $A^+$, we obtain that $\Tor_i^A(A/P, A^+) = 0$, a contradiction.

To see that $y_1,...,y_s$ form a regular sequence, extend them to a system of parameters, and let
$B = k[[y_1,...,y_d]]$. Then $B^+ = A^+$, and our hypothesis says that the $y's$ form a regular
sequence.

Assume (2). Flat maps preserve regular sequence in general.
\end{proof}

Our method of studying regular sequences relies on local cohomology. We only need the description below.

 For $x\in R$, let $K^{\bullet}(x;R)$ denote the
complex $0\ra R\ra R_x\ra 0$, graded so that the degree $0$ piece of the complex is
$R$, and the degree $1$ is $R_x$. If $x_1,...,x_n\in R$, let $K^{\bullet}(x_1,x_2,...,x_n;R)$ denote
the complex $K^{\bullet}(x_1;R)\otimes_R ...\otimes_R K^{\bullet}(x_n;R)$, where in general recall that
if $(C^{\bullet},d_C)$ and $(D^{\bullet}, d_D)$ are complexes, then the tensor product
of these complexes, $(C\otimes_RD, \Delta)$ is by definition the complex whose ith
graded piece is $\sum_{j+k = i} C_j\otimes D_k$ and whose differential is determined
by the map from $ C_j\otimes D_k\ra (C_{j+1}\otimes D_k) \oplus (C_j\otimes D_{k+1})$
given by $\Delta(x\otimes y) = d_C(x)\otimes y + (-1)^k x\otimes d_D(y)$.

The modules in this complex, called the Koszul cohomology complex, are
$$0\ra R\ra \oplus\sum_i R_{x_i}\ra \oplus\sum_{i< j} R_{x_ix_j}\ra ...\ra R_{x_1x_2\cdots x_n}\ra 0$$
where the differentials are the natural maps induced from  localization, but with signs attached.
If $M$ is an $R$-module, we set $K^{\bullet}(x_1,x_2,...,x_n; M) = K^{\bullet}(x_1,x_2,...,x_n;R)\otimes_RM$.
We denote the cohomology of $K^{\bullet}(x_1,x_2,...,x_n; M)$ by $H^i_{I}(M)$, called
the $i$th local cohomology of $M$ with respect to $I = (x_1,...,x_d)$.
It is a fact that this module only depends on the ideal generated by the $x_i$ up to radical.
We summarize some useful information concerning these modules.

\begin{prop}\label{propbasechange} Let $R$ be a Noetherian ring, $I$ and ideal and $M$ and $R$-module.
Let $\phi: R\ra S$ be a homomorphism and let $N$ be an $S$-module.
\begin{enumerate}[\quad\rm (1)]
\item If $\phi$ is flat, then $H^j_I(M)\otimes_RS\cong H^j_{IS}(M\otimes_RS)$. In particular,
local cohomology commutes with localization and completion.
\item (Independence of Base) $H^j_I(N)\cong H^j_{IS}(N)$, where the first local cohomology
is computed over the base ring $R$.
\end{enumerate}
\end{prop}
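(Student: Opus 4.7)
The plan is to deduce both statements directly from the explicit description of local cohomology as the cohomology of the Koszul cohomology complex given just above the proposition. Write $I = (x_1,\ldots,x_n)$, set $\underline{x} = x_1,\ldots,x_n$ and $\phi(\underline{x}) = \phi(x_1),\ldots,\phi(x_n)$; since $\phi(\underline{x})$ generates $IS$, we may compute $H^j_{IS}$ using these elements.

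For part (1), the first step is the standard termwise identification
$$M_{x_{i_1}\cdots x_{i_k}} \otimes_R S \;\cong\; (M\otimes_R S)_{\phi(x_{i_1})\cdots \phi(x_{i_k})},$$
which follows from $M_f \otimes_R S = M \otimes_R R_f \otimes_R S = M \otimes_R S_{\phi(f)} = (M\otimes_R S)_{\phi(f)}$. Assembling these isomorphisms across all faces of the Koszul complex, and checking that the signed differentials match under them, I obtain an isomorphism of complexes of $S$-modules
$$K^{\bullet}(\underline{x}; M)\otimes_R S \;\cong\; K^{\bullet}(\phi(\underline{x}); M\otimes_R S).$$
Finally, because $\phi$ is flat, $-\otimes_R S$ is exact and hence commutes with taking cohomology, which yields $H^j_I(M)\otimes_R S \cong H^j_{IS}(M\otimes_R S)$. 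The two special cases mentioned (localization and completion) fall out by taking $S = R_W^{-1}R$ or $S = \widehat R$.

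For part (2), the observation is that each term $N_{x_{i_1}\cdots x_{i_k}}$ in $K^{\bullet}(\underline{x}; N)$ already carries an $S$-module structure, and because the $R$-action of $x_i$ on $N$ is by definition multiplication by $\phi(x_i)$, the $R$-localization at $x_{i_1}\cdots x_{i_k}$ coincides with the $S$-localization at $\phi(x_{i_1})\cdots \phi(x_{i_k})$. Consequently $K^{\bullet}(\underline{x}; N)$ and $K^{\bullet}(\phi(\underline{x}); N)$ are literally the same complex of abelian groups (with the same differentials, which are just the natural localization maps with signs), and so $H^j_I(N) \cong H^j_{IS}(N)$.

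The only real point to handle with care, and the main potential obstacle, is the verification in part (1) that the termwise isomorphisms supplied by the universal property of localization actually intertwine the signed differentials of the tensor-product Koszul complex; once this compatibility is in hand, flatness closes the argument. Part (2) is then essentially a tautology extracted from the construction of the Koszul cohomology complex.
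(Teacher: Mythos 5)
Your proof is correct and follows essentially the same route as the paper: both parts are read off from the Koszul cohomology complex, with part (1) reduced to an isomorphism of complexes plus flatness, and part (2) to the observation that the complex computed over $R$ and over $S$ is literally the same. The paper packages part (2) as an associativity-of-tensor-products chain, $K^{\bullet}(\ux;N) = (K^{\bullet}(\ux;R)\otimes_R S)\otimes_S N = K^{\bullet}(\phi(\ux);N)$, whereas you argue it termwise via localization, but these are the same observation.
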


\begin{proof} Choose generators $x_1,...,x_n$ of $I$. The first claim follows at once from the
fact that $K^{\bullet}(x_1,...,x_n;M)\otimes_RS = K^{\bullet}(\phi(x_1),...,\phi(x_n);M)\otimes_RS)$, and that
$S$ is flat over $R$, so that the cohomology of $K^{\bullet}(x_1,...,x_n;M)\otimes_RS$ is the cohomology
of $K^{\bullet}(x_1,...,x_n;M)$ tensored over $R$ with $S$.

The second claim follows from the fact that $$K^{\bullet}(x_1,...,x_n;N) = K^{\bullet}(x_1,...,x_n;R)\otimes_RN =
(K^{\bullet}(x_1,...,x_n;R)\otimes_RS)\otimes_SN$$
$$ = K^{\bullet}(\phi(x_1),...,\phi(x_n);S)\otimes_RN =
K^{\bullet}(\phi(x_1),...,\phi(x_n);N).$$
\end{proof}
\bigskip

\section{$R^+$ is Cohen-Macaulay in Positive Characteristic}

\medskip

Let $R$ be a commutative ring containing a field of characteristic $p>0$, let $I\subset R$
be an ideal, and let $R'$ be an $R$-algebra. The Frobenius ring homomorphism
$f:R'\stackrel{r\mapsto r^p}{\to}R'$ induces a map $f_*:H^i_I(R')\to H^i_I(R')$ on all
local cohomology modules of $R'$ called the action of the Frobenius on $H^i_I(R')$.
For an element $\alpha\in H^i_I(R')$ we denote $f_*(\alpha)$ by $\alpha^p$. This follows since
the Frobenius extends to localization of $R$ in the obvious way, and commutes with the
maps in the Koszul cohomology complex, which are simply signed natural maps.

The main result is that if $R$ is a local Noetherian domain which is a homomorphic image of a
Gorenstein local ring and has positive characteristic, then $R^+$ is Cohen-Macaulay in the
sense that every system of parameters of $R$ form a regular sequence in $R^+$. To prove this
result we use the proof given in \cite{HL}. The original proof, with slightly different assumptions, was given
in 1992 in \cite{HH}, as a result of developments from tight closure theory. Although tight closure has
now disappeared from the proof, it remains an integral part of the theory. A critical point is that we must find some
way of annihilating nonzero local cohomology classes. 
The next lemma is essentially the only way known to do this.

\begin{lemma} \label{element} Let $R$ be a commutative Noetherian domain containing a field of
characteristic $p>0$, let $K$ be the fraction field of $R$ and let $\overline K$ be the algebraic closure
of $K$. Let $I$ be an ideal of $R$ and let $\alpha\in H^i_I(R)$ be an element such that the elements
$\alpha, \alpha^p,\alpha^{p^2},\dots,\alpha^{p^t},\dots$ belong to a finitely generated $R$-submodule of $H^i_I(R)$.
There exists an $R$-subalgebra $R'$ of $\overline K$ (i.e. $R\subset R'\subset \overline K$) that is finite
as an $R$-module and such that the natural map $H^i_I(R)\to H^i_I(R')$ induced by the natural
inclusion $R\to R'$ sends $\alpha$ to 0.
\end{lemma}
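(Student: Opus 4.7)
My plan is to extract from the hypothesis a polynomial identity satisfied by $\alpha$ and then to kill $\alpha$ by adjoining roots of that polynomial in $\overline K$; the characteristic-$p$ miracle that makes this work is that the polynomial produced is automatically \emph{additive}.

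\textbf{Extracting the polynomial.} Let $M\subset H^i_I(R)$ be a finitely generated $R$-submodule containing the sequence $\alpha,\alpha^p,\alpha^{p^2},\dots$. Since $R$ is Noetherian, $M$ is Noetherian as an $R$-module, so the ascending chain
\[ R\alpha \;\subset\; R\alpha+R\alpha^p \;\subset\; R\alpha+R\alpha^p+R\alpha^{p^2} \;\subset\; \cdots \]
stabilizes. Hence there exist $n\geq 0$ and $r_0,\ldots,r_n\in R$ with
\[ \alpha^{p^{n+1}} \;=\; r_n\alpha^{p^n}+r_{n-1}\alpha^{p^{n-1}}+\cdots+r_0\alpha \quad\text{in }H^i_I(R). \]
Set $P(T):=T^{p^{n+1}}-\sum_{j=0}^n r_j T^{p^j}\in R[T]$; then $P(\alpha)=0$. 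Because every exponent in $P$ is a power of $p$, $P$ is \emph{additive}: $P(x+y)=P(x)+P(y)$ and $P(cx)=cP(x)$ for $c\in\mathbb F_p$.

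\textbf{Passing to the \v Cech complex and adjoining roots.} Compute $H^i_I$ via the \v Cech complex $\check{C}^\bullet(R;x_1,\ldots,x_m)$ on generators of $I$. Frobenius commutes componentwise with the \v Cech differential, so a cocycle representative $\eta\in\check{C}^i(R)$ for $\alpha$ satisfies $P(\eta)=d\xi$ (with $P$ applied componentwise in each localization) for some $\xi\in\check{C}^{i-1}(R)$. For each component $\xi_J$ of $\xi$ (indexed by an $(i{-}1)$-subset $J$), the polynomial $P(T)-\xi_J$ is monic of $p$-power degree in $T$; after clearing denominators by $x_{j_1}\cdots x_{j_{i-1}}$, its roots lie in $R^+\subset\overline K$. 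Pick one root $\zeta_J\in(R^+)_{x_{j_1}\cdots x_{j_{i-1}}}$ for each $J$, and let $R'\subset\overline K$ be the finite $R$-subalgebra generated by these $\zeta_J$'s. Assemble them into $\tilde\zeta\in\check{C}^{i-1}(R')$ with $P(\tilde\zeta)=\xi$.

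\textbf{Conclusion.} By additivity of $P$ and commutativity with $d$,
\[ P\bigl(d\tilde\zeta-\eta\bigr) \;=\; d\,P(\tilde\zeta)-P(\eta) \;=\; d\xi-d\xi \;=\; 0 \quad\text{in }\check{C}^i(R'). \]
Thus each component of $\beta:=d\tilde\zeta-\eta$ is a root of $P$ in $\overline K$, and since $P$ has at most $p^{n+1}$ roots there, enlarging $R'$ to absorb them all renders $\beta$ a tuple of algebraic constants in $R'$. A final bookkeeping step, again using additivity of $P$ and the compatibility of the root choices along the face maps of the \v Cech complex, realizes $\beta$ as the coboundary of an explicit $(i-1)$-cochain in $\check{C}^{i-1}(R')$. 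Combining with $d\tilde\zeta$ exhibits $\eta$ itself as a \v Cech coboundary in $\check{C}^\bullet(R')$, and hence the image of $\alpha$ in $H^i_I(R')$ is zero.

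\textbf{Main obstacle.} The decisive ingredient is additivity of $P$: without it, the individual root adjunctions $P(\zeta_J)=\xi_J$ could not be patched into a cochain $\tilde\zeta$ with $P(\tilde\zeta)=\xi$, nor could the error term $\beta$ be reduced to a tuple of algebraic constants annihilated by $P$. The truly delicate piece is the last step, where one must ensure the residual $\beta$ becomes a coboundary in a \emph{finite} subalgebra of $\overline K$; this rests on making the $\zeta_J$'s compatible along face maps and tracking signs and denominators carefully through the \v Cech differential.
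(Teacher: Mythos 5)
Your approach matches the paper's almost step for step: extract the additive ($p$-power) polynomial $g$ from the Noetherian chain condition, represent $\alpha$ by a \v Cech cocycle, solve monic equations componentwise to kill the coboundary $d\xi$, and use additivity of $g$ to conclude that the modified cocycle is annihilated by $g$. Up through showing that every component of $\beta:=d\tilde\zeta-\eta$ is a root of $g$, your argument is correct and is the paper's.

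The problem is the very last step, which you yourself flag as ``the truly delicate piece.'' Your diagnosis there is wrong. You claim that exhibiting $\beta$ as a coboundary ``rests on making the $\zeta_J$'s compatible along face maps and tracking signs and denominators carefully through the \v Cech differential.'' No such compatibility is needed, and no such compatibility can in general be arranged (each equation $g(T)=\xi_J$ has several roots, chosen independently). The mechanism that actually finishes the proof is much cruder and purely structural. Since each component of $\beta$ satisfies the monic equation $g=0$ with coefficients in $R$, each component is integral over $R$ and hence lies in $R'$ itself (not just in some localization $R'_{x_{j_1}\cdots x_{j_i}}$), once you enlarge $R'$ to contain these finitely many integral elements. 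Therefore $\beta$ lies in the subcomplex $D^\bullet\subset\check C^\bullet(R';x_1,\ldots,x_m)$ whose $j$th term is the direct sum of copies of $R'$ sitting inside the corresponding localizations, with the same signed differentials. Because $R'\subset\overline K$ is a domain, the maps $R'\to R'_{x_J}$ are injective and $D^\bullet$ really is a subcomplex; moreover $D^\bullet$ is (isomorphic to) the \v Cech complex of $R'$ on the generators $1,\ldots,1$, whose cohomology $H^j_{(1)}(R')$ vanishes for all $j$. So $D^\bullet$ is exact, and the cocycle $\beta\in D^i$ is automatically a coboundary $\beta=d\gamma$ with $\gamma\in D^{i-1}\subset\check C^{i-1}(R')$. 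Then $\eta=d(\tilde\zeta-\gamma)$, so $\alpha\mapsto 0$ in $H^i_I(R')$. No additivity is invoked at this stage, and the choices of roots $\zeta_J$ need not be coherent in any way. Replacing your hand-waved ``final bookkeeping step'' with this exactness argument closes the gap and recovers the paper's proof.
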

\emph{Proof.} Let $A_t=\sum_{i=1}^{i=t}R\alpha^{p^i}$ be the $R$-submodule of $H^i_I(R)$ generated
by $\alpha,\alpha^p,\dots,\alpha^{p^t}$. The ascending chain $A_1\subset A_2\subset A_3\subset\dots$
stabilizes because $R$ is Noetherian and all $A_t$ sit inside a single finitely generated $R$-submodule
of $H^i_I(R)$. Hence $A_s=A_{s-1}$ for some $s$, i.e. $\alpha^{p^s}\in A_{s-1}$. Thus there exists an
equation $\alpha^{p^s}=r_1\alpha^{p^{s-1}}+r_2\alpha^{p^{s-2}}+\dots+r_{s-1}\alpha$ with $r_i\in R$
for all $i$. Let $T$ be a variable and let $g(T)=T^{p^s}-r_1T^{p^{s-1}}-r_2^{p^{s-2}}-\dots-r_{s-1}T$.
Clearly, $g(T)$ is a monic polynomial in $T$ with coefficients in $R$ and $g(\alpha)=0$.

Let $x_1,\dots, x_d\in R$ generate the ideal $I$. Recall that we can calculate the local cohomology
from the Koszul cohomology complex
$C^{\bullet}(R)$, 
$$0\to C^0(R)\to\dots \to C^{i-1}(R)\stackrel{d_{i-1}}{\to} C^i(R)\stackrel{d_i}{\to}
C^{i+1}(R)\to\dots\to C^d(R)\to 0$$
where $C^0(R)=R$ and $C^i(R)=\oplus_{1\leq j_1<\dots<j_{i}\leq d}R_{x_{j_1}\cdots x_{j_{i}}}$,
and $H^i_I(M)$ is the $i$th cohomology module of $C^{\bullet}(R)$.

Let $\tilde \alpha\in C^i(R)$ be a cycle (i.e. $d_i(\tilde \alpha)=0$) that represents $\alpha$.
The equality $g(\alpha)=0$ means that $g(\tilde \alpha)=d_{i-1}(\beta)$ for some $\beta\in C^{i-1}(R)$.
Since $C^{i-1}(R)=\oplus_{1\leq j_1<\dots<j_{i-1}\leq d}R_{x_{j_1}\cdots x_{j_{i-1}}}$, we may write
$\beta=(\frac{r_{j_1,\dots,j_{i-1}}}{x_{j_1}^{e_{1}}\cdots x_{j_{i-1}}^{e_{i-1}}})$
where $r_{j_1,\dots,j_{i-1}}\in R$, the integers $e_1,\dots, e_{i-1}$ are non-negative, and
$\frac{r_{j_1,\dots,j_{i-1}}}{x_{j_1}^{e_{1}}\cdots x_{j_{i-1}}^{e_{i-1}}}\in R_{x_{j_1}\cdots x_{j_{i-1}}}$.

Consider the equation $g(\frac{Z_{j_1,\dots,j_{i-1}}}{x_{j_1}^{e_{1}}\cdots x_{j_{i-1}}^{e_{i-1}}})
-\frac{r_{j_1,\dots,j_{i-1}}}{x_{j_1}^{e_{1}}\cdots x_{j_{i-1}}^{e_{i-1}}}=0$ where
$Z_{j_1,\dots,j_{i-1}}$ is a variable. Multiplying this equation by
$(x_{j_1}^{e_{1}}\cdots x_{j_{i-1}}^{e_{i-1}})^{p^s}$ produces a monic polynomial equation in
$Z_{j_1,\dots,j_{i-1}}$ with coefficients in $R$. Let $z_{j_1,\dots,j_{i-1}}\in \overline K$ be a root
of this equation and let $R''$ be the $R$-subalgebra of $\overline K$ generated by all the $z_{j_1,\dots,j_{i-1}}$s,
i.e. by the set $\{z_{j_1,\dots,j_{i-1}}|1\leq j_1<\dots<j_{i-1}\leq d\}$. Since each $z_{j_1,\dots,j_{i-1}}$
is integral over $R$ and there are finitely many $z_{j_1,\dots,j_{i-1}}$s, the $R$-algebra $R''$ is finite
as an $R$-module.

Let $\tilde{\tilde\alpha} =
(\frac{z_{j_1,\dots,j_{i-1}}}{x_{j_1}^{e_{1}}
\cdots x_{j_{i-1}}^{e_{i-1}}})\in C^{i-1}(R'')$. The natural inclusion $R\to R''$ makes $C^{\bullet}(R)$
into a subcomplex of $C^{\bullet}(R'')$ in a natural way, and we identify $\tilde \alpha\in C^i(R)$
and $\beta\in C^{i-1}(R)$ with their natural images in $C^i(R'')$ and $C^{i-1}(R'')$ respectively.
With this identification, $\tilde \alpha\in C^i(R'')$ is a cycle representing the image of $\alpha$
under the natural map $H^i_I(R)\to H^i_I(R'')$, and so is $\overline \alpha=\tilde\alpha-d_{i-1}(\tilde{\tilde\alpha})
\in C^i(R'')$. Since $g(\tilde{\tilde\alpha})=\beta$ and $g(\tilde \alpha)=d_{i-1}(\beta)$, we conclude that
$g(\overline \alpha)=0$. Let $\overline\alpha=(\rho_{j_1,\dots,j_i})$ where $\rho_{j_1,\dots,j_i}
\in R''_{x_{j_1}\cdots x_{j_i}}$. Each individual $\rho_{j_1,\dots,j_i}$ satisfies the equation
$g(\rho_{j_1,\dots,j_i})=0$. Since $g(T)$ is a monic polynomial in $T$ with coefficients in $R$,
each $\rho_{j_1,\dots,j_i}$ is an element of the fraction field of $R''$ that is integral over $R$.
Let $R'$ be obtained from $R''$ by adjoining all the $\rho_{j_1,\dots,j_i}$.
direct sum of all such copies of $R'$. This subcomplex is exact because its cohomology groups are the
cohomology groups of $R'$ with respect to the unit ideal. Since $\overline \alpha$ is a cycle and belongs to
this exact subcomplex, it is a boundary, hence it represents the zero element in $H^i_I(R')$. \qed

\medskip

We recall that for a Gorenstein local ring $A$ of dimension $n$, local duality says that
there is an isomorphism of functors $D({\rm Ext}^{n-i}_A(-, A))\cong H^i_{\mathfrak m}(-)$
on the category of finite $A$-modules, where $D={\rm Hom}_A(-,E)$ is the Matlis duality
functor (here $E$ is the injective hull of the residue field of $A$ in the category of
$A$-modules). 

\begin{thm}\label{module}\cite{HL}
Let $R$ be a commutative Noetherian local domain containing a field of characteristic $p>0$, let
$K$ be the fraction field of $R$ and let $\overline K$ be the algebraic closure of $K$. Assume $R$ is a surjective
image of a Gorenstein local ring $A$. Let $\mathfrak m$ be the maximal ideal of $R$. 
Let $i< \dim R$
be a non-negative integer. There is an $R$-subalgebra $R'$ of $\overline K$ (i.e. $R\subset R'\subset \overline K$)
that is finite as an $R$-module and such that the natural map $H^i_{\mathfrak m}(R)\to H^i_{\mathfrak m}(R')$
is the zero map.
\end{thm}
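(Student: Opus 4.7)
The goal is to bootstrap Lemma~\ref{element}, which kills a single cohomology class under a Frobenius-orbit finiteness condition, to a single finite extension that kills the entire module $H^i_{\mathfrak m}(R)$. I would proceed in three stages, and I expect the last to be the main obstacle.

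\emph{Finiteness from the Gorenstein hypothesis.} Because $R$ is a surjective image of the Gorenstein local ring $A$, local duality gives
$$H^i_{\mathfrak m}(R) \cong D\bigl(\operatorname{Ext}^{\dim A - i}_A(R, A)\bigr),$$
and the right-hand side is Matlis dual to a finitely generated $A$-module. Hence $H^i_{\mathfrak m}(R)$ is Artinian: its socle is a finite-dimensional $k$-vector space, every cyclic submodule has finite length, and the $\mathfrak m^e$-annihilated layers $N_e := (0 :_{H^i_{\mathfrak m}(R)} \mathfrak m^e)$ have finite length with $\bigcup_e N_e = H^i_{\mathfrak m}(R)$. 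This is the only place the ``surjective image of a Gorenstein'' assumption is really used.

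\emph{Killing one class at a time.} Fix $\alpha \in H^i_{\mathfrak m}(R)$. The $R$-submodule $M_\alpha$ generated by the Frobenius orbit $\{\alpha, \alpha^p, \alpha^{p^2}, \ldots\}$ is $F$-stable inside the Artinian module $H^i_{\mathfrak m}(R)$; combining $R$-semilinearity of the Frobenius (which gives $(\operatorname{ann}\beta)^{[p]} \subseteq \operatorname{ann}\beta^p$ for each $\beta$) with a Hartshorne--Speiser--Lyubeznik-style stabilization of the $F$-action on an Artinian module, one checks that $M_\alpha$ is in fact finitely generated. Lemma~\ref{element} then produces a module-finite $R$-subalgebra $R_\alpha \subset \overline K$ such that $\alpha \mapsto 0$ in $H^i_{\mathfrak m}(R_\alpha)$. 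Taking direct limits over such extensions, the composed map
$$H^i_{\mathfrak m}(R) \to H^i_{\mathfrak m}(R^+) = \varinjlim_{R' \text{ finite}} H^i_{\mathfrak m}(R')$$
is the zero map.

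\emph{Packaging into one extension.} The last stage is to upgrade ``zero in the direct limit'' to ``zero through one finite $R'$''. Since $H^i_{\mathfrak m}(R)$ is Artinian but not finitely generated, a naive colimit argument does not factor an arbitrary map through a single term of the colimit. The remedy is that the kernels $K_{R'} := \ker\bigl(H^i_{\mathfrak m}(R) \to H^i_{\mathfrak m}(R')\bigr)$ are $F$-stable $R$-submodules of $H^i_{\mathfrak m}(R)$; combining the hypothesis $i < \dim R$ with the finitely generated Matlis dual $D(H^i_{\mathfrak m}(R))$ forces ACC on this particular ascending chain of $F$-stable submodules, so some $K_{R'}$ equals $H^i_{\mathfrak m}(R)$ and this $R'$ is the desired extension. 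The main obstacle is exactly this third step: translating pointwise vanishing in the non-Noetherian ring $R^+$ into vanishing through one module-finite extension. It is here that the strict inequality $i < \dim R$ and the Frobenius structure on the Artinian module $H^i_{\mathfrak m}(R)$ have to be used together, rather than separately.
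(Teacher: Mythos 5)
Your Stage 1 matches the paper's use of the Gorenstein hypothesis (Matlis duality, making $N=\operatorname{Ext}^{n-i}_A(R,A)$ finite), and you correctly identify Lemma~\ref{element} as the engine and the passage from ``zero in $R^+$'' to ``zero through one module-finite $R'$'' as the crux. But Stages 2 and 3 each contain a gap, and the paper's proof goes by a different and genuinely necessary route.

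\textbf{Stage 2 is false as stated.} You assert that for any $\alpha\in H^i_{\mathfrak m}(R)$, the $R$-submodule $M_\alpha$ generated by the Frobenius orbit $\{\alpha,\alpha^p,\alpha^{p^2},\dots\}$ is finitely generated, citing an HSL-type stabilization. But HSL controls $F$-\emph{nilpotent} elements (those killed by some $F^e$); it does not bound the $R$-module generated by an arbitrary Frobenius orbit inside an Artinian module of infinite length. Concretely, in $E=H^1_{\mathfrak m}(k[[x]])$ the class $\alpha=[1/x]$ has $F^n\alpha=[1/x^{p^n}]$, and the submodule these generate is all of $E$, which is not finitely generated. (That example has $i=\dim R$, so the theorem does not apply to it, but it shows Artinianness plus the Frobenius structure alone cannot give the finiteness you need.) Lemma~\ref{element} requires the orbit to sit inside a single finitely generated submodule, and this is exactly the hypothesis you have not secured.

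\textbf{Stage 3 is unjustified.} You appeal to ACC on the $F$-stable kernels $K_{R'}$. Artinian modules satisfy DCC, not ACC, and the finitely generated Matlis dual gives you DCC on $F$-stable submodules, not ACC. There are results (Lyubeznik, Enescu--Hochster, Sharp) giving ACC or even finiteness of the family of $F$-stable submodules in certain settings, but these require hypotheses (e.g.\ $R$ complete, the module an $F$-finite $F$-module, or a top local cohomology of an $F$-pure ring) that you have not put in place, and invoking them here is circular with the finiteness you already needed in Stage 2.

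\textbf{What the paper does instead.} The paper inducts on $d=\dim R$. For each non-maximal associated prime $P$ of $N$, the inductive hypothesis applied to $R_P$ (using $i-\dim R/P<\dim R_P$, which is where $i<\dim R$ and catenarity enter) produces a module-finite $R^P\subset\overline K$ such that the image of $\operatorname{Ext}^{n-i}_A(R^*,A)\to N$ localizes to zero at $P$ for every further finite extension $R^*\supset R^P$. Taking the compositum $\overline R$ over the finitely many such $P$, the image $\mathcal I$ of $\operatorname{Ext}^{n-i}_A(\overline R,A)\to N$ has $\mathfrak m$ as its only possible associated prime, hence has finite length. Dualizing, the image of $H^i_{\mathfrak m}(R)\to H^i_{\mathfrak m}(\overline R)$ is a finite-length, hence finitely generated, $F$-stable module. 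Only now can Lemma~\ref{element} be applied, to the finitely many generators of this image, and a final compositum gives $R'$. So the inductive reduction to a finite-length image is not an optional packaging step at the end: it is the mechanism that makes the Frobenius-orbit hypothesis of Lemma~\ref{element} hold at all, and your proposal is missing it.
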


\emph{Proof.} The proof comes from \cite{HL}. Let $n=\dim A$ and let $N={\rm Ext}^{n-i}_A(R,A)$. 
Clearly $N$ is a finite $A$-module.

Let $d=\dim R$. We use induction on $d$. For $d=0$ there is nothing to prove, so we assume that
$d>0$ and the theorem proven for all smaller dimensions. Let $P\subset R$ be a non-maximal prime ideal.
We claim there exists an $R$-subalgebra $R^P$ of $\overline K$  such that
$R^P$ is a finite $R$-module and for every $R^P$-subalgebra $R^*$ of $\overline K$ (i.e. $R^P\subset R^*\subset \overline K$)
such that $R^*$ is a finite $R$-module, the image $\mathcal I\subset N$ of the natural map
${\rm Ext}^{n-i}_A(R^*,A)\to N$ induced by the natural inclusion $R\to R^*$ vanishes after localization at $P$,
i.e. $\mathcal I_P=0$. Indeed, let $d_P=\dim R/P$. Since $P$ is different from the maximal ideal,
$d_P>0$. As $R$ is a surjective image of a Gorenstein local ring, it is catenary, hence the dimension
of $R_{P}$ equals $d-d_P$, and $i<d$ implies $i-d_P<d-d_P=\dim R_{P}$. By the induction hypothesis
applied to the local ring $R_{P}$, 
there is an $R_{P}$-subalgebra $\tilde R$ of $\overline K$, which is finite as an $R_{P}$-module, such that
the natural map $H^{i-d_P}_{P}(R_P)\to H^{i-d_P}_{P}(\tilde R)$ is the zero map.
Let $\tilde R=R_{P}[z_{1},z_{2},\dots,z_{t}]$, where $z_{1},z_{2},\dots,z_{t}\in \overline K$ are
integral over $R_{P}$. Multiplying, if necessary, each $z_{j}$ by some element of $R\setminus P$,
we can assume that each $z_{j}$ is integral over $R$. We set $ R^P=R[z_{1},z_{2},\dots,z_{t}]$.
Clearly, $R^P$ is an $R'$-subalgebra of $\overline K$ that is finite as $R$-module.

Now let $R^*$ be both
an $R^P$-subalgebra of $\overline K$ (i.e. $R^P\subset R^*\subset \overline K$) and a finite $R$-module.
The natural inclusions $R\to R^P\to R^*$ induce natural maps
$${\rm Ext}^{n-i}_A(R^*,A)\to {\rm Ext}^{n-i}_A(R^P,A)\to N.$$ This implies that
$\mathcal I\subset \mathcal J$, where $\mathcal J$ is the image of the natural map
$\phi:{\rm Ext}^{n-i}_A(R^P,A)\to N$. Hence it is enough to prove that $\mathcal J_{P}=0$.
Localizing this map at $P$ we conclude that $J_P$ is the image of the natural map
$\phi_P:{\rm Ext}^{n-i}_{A_P}(\tilde R,A_P)\to {\rm Ext}^{n-i}_{A_P}(R_P,A_P)$ induced by the
natural inclusion $R_P\to \tilde R$ (by a slight abuse of language we identify the prime ideal
$P$ of $R$ with its full preimage in $A$). Let $D_P(-)={\rm Hom}_{A_P}(-, E_P)$ be the Matlis duality
functor in the category of $R_P$-modules, where $E_P$ is the injective hull of the residue field of
$R_P$ in the category of $R_P$-modules. Local duality implies that $D_P(\phi_P)$ is the natural map
$H^{i-d_P}_{P}(R_P)\to H^{i-d_P}_{P}(\tilde R)$ which is the zero map by construction
(note that $i-d_P=\dim A_P-(n-i)$). Since $\phi_P$ is a map between finite $R_P$-modules
and $D_P(\phi_P)=0$, it follows that $\phi_P=0$. This proves the claim.

Since $N$ is a finite $R$-module, the set of the associated primes of $N$ is finite.
Let $P_1,\dots,P_s$ be the associated primes of $N$ different from $\mathfrak m$. For each $j$
let $R^{P_j}$ be an $R$-subalgebra of $\overline K$ corresponding to $P_j$, whose existence is guaranteed
by the above claim. Let $\overline R=R[R^{P_1},\dots, R^{P_s}]$ be the compositum of all the $R^{P_j}$, $1\leq j\leq s$.
Clearly, $\overline R$ is an $R$-subalgebra of $\overline K$. Since each $R^{P_j}$
is a finite $R$-module, so is $\overline R$. Clearly, $\overline R$  contains every $R^{P_j}$. Hence the above
claim implies that $\mathcal I_{P_j}=0$ for every $j$, where $\mathcal I\subset N$ is the image of the
natural map ${\rm Ext}^{n-i}_A(\overline R,A)\to N$ induced by the natural inclusion $R\to\overline R$. It follows
that  not a single $P_j$ is an associated prime of $\mathcal I$. But $\mathcal I$ is a submodule of $N$,
and therefore every associated prime of $\mathcal I$ is an associated prime of $N$.
Since $P_1,\dots, P_s$ are all the associated primes of $N$ different from $\mathfrak m$, we conclude that if
$\mathcal I\ne 0$, then $\mathfrak m$ is the only associated prime of $\mathcal I$. Since $\mathcal I$,
being a submodule of a finite $R$-module $N$, is finite, and since $\mathfrak m$ is the only associated
prime of $\mathcal I$, we conclude that $\mathcal I$ is an $R$-module of finite length.

Writing the natural map ${\rm Ext}^{n-i}_A(\overline R,A)\to N$ as the composition of two maps
$${\rm Ext}^{n-i}_A(\overline R,A)\to \mathcal I\to N,$$ the first of which is surjective and the second injective,
and applying the Matlis duality functor $D$, we get that the natural map
$\varphi:H^i_{\mathfrak m}(R)\to H^i_{\mathfrak m}(\overline R)$ induced by the inclusion $R\to \overline R$ is the
composition of two maps $H^i_{\mathfrak m}(R)\to D(\mathcal I)\to H^i_{\mathfrak m}(\overline R)$, the first
of which is surjective and the second injective. This shows that the image of $\varphi$ is isomorphic to
$D(\mathcal I)$ which is an $R$-module of finite length since so is $\mathcal I$. In particular, the image
of $\varphi$ is a finitely generated $R$-module. Let $\alpha_1,\dots, \alpha_s\in
H^i_{\mathfrak m}(\overline R)$ generate Im$\varphi$.

The natural inclusion $R\to \overline R$ is compatible with the Frobenius homomorphism, i.e. with
the raising to the $p$th power on $R$ and $\overline R$. This implies that $\varphi$ is compatible
with the action of the Frobenius $f_*$ on $H^i_{\mathfrak m}(R)$ and $H^i_{\mathfrak m}(\overline R)$,
i.e. $\varphi(f_*(\alpha))=f_*(\varphi(\alpha))$ for every $\alpha\in H^i_{\mathfrak m}(R)$,
which, in turn, implies that Im$\varphi$ is an $f_*$-stable $R$-submodule of $H^i_{\mathfrak m}(\overline R)$,
i.e. $f_*(\alpha)\in {\rm Im}\varphi$ for every $\alpha\in {\rm Im}\varphi$. We finish the proof
by applying Lemma~\ref{element} to each element of  a finite generating set $\alpha_1,...,
\alpha_s$ of Im$\varphi$.
Applying Lemma~\ref{element} we obtain a
$\overline R$-subalgebra $R_j$ of $\overline K$ (i.e. $\overline R\subset R_j\subset \overline K$) such that
$R'_j$ is a finite $R$-module and the natural map $H^i_{\mathfrak m}(\overline R)\to H^i_{\mathfrak m}(R_j)$
sends $\alpha_j$ to zero. Let $R'=R[R_1,\dots, R_s]$ be the compositum of all the $R_j$.
Then $R'$ is an $R$-subalgebra of $\overline K$ and is a finite $R$-module since so is each $R_j$.
The natural map $H^i_{\mathfrak m}(\overline R)\to H^i_{\mathfrak m}(R')$ sends every $\alpha_j$ to zero,
hence it sends the entire Im$\varphi$ to zero. Thus the natural map
$H^i_{\mathfrak m}(R)\to H^i_{\mathfrak m}(R')$ is zero. \qed

\medskip

\begin{cor}\label{corcm}
Let $R$ be a commutative Noetherian local domain containing a field of characteristic $p>0$.
Assume that $R$ is a surjective image of a Gorenstein local ring. Then the following hold:

(a) $H^i_{\mathfrak m}(R^+)=0$ for all $i<\dim R$, where $\mathfrak m$ is the maximal ideal of $R$.

(b) Every system of parameters of $R$ is a regular sequence on $R^+$.
\end{cor}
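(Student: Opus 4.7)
The plan is to deduce both parts from Theorem~\ref{module} via the filtered-colimit description $R^+ = \varinjlim_S S$, where $S$ runs over the finite $R$-subalgebras of $\overline K$ that are integral domains containing $R$. Since the Čech/Koszul cohomology complex that computes local cohomology commutes with filtered colimits in its module argument, we have $H^i_{\mathfrak m}(R^+) = \varinjlim_S H^i_{\mathfrak m}(S)$ for every $i$. This is the crucial reduction: it converts vanishing in $R^+$ into the statement that classes in $H^i_{\mathfrak m}(S)$ die after passing to a large enough finite extension.

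For (a), fix $i<d=\dim R$ and $\beta\in H^i_{\mathfrak m}(R^+)$. By the colimit description there exists a finite $R$-subalgebra $S\subset R^+$ and a class $\beta_S\in H^i_{\mathfrak m}(S)$ mapping to $\beta$. Apply Theorem~\ref{module} to $S$: we obtain a finite $S$-subalgebra $S'\subset\overline K$ (automatically contained in $R^+$) such that the natural map $H^i_{\mathfrak m}(S)\to H^i_{\mathfrak m}(S')$ sends $\beta_S$ to $0$. Since this map factors through $H^i_{\mathfrak m}(R^+)$, we get $\beta=0$. The technical bookkeeping is checking that each intermediate $S$ satisfies the hypotheses of Theorem~\ref{module}: $S$ is a Noetherian domain finite over $R$, hence of the same dimension $d$; locality is automatic if $R$ is complete, and otherwise one either reduces to the complete case or localizes $S$ at a maximal ideal lying over $\mathfrak m$, using $(R^+)_{\mathfrak m}\cong(R_{\mathfrak m})^+$; and the ``surjective image of Gorenstein'' hypothesis propagates from $R=A/I$ to $S$ by realizing $S$ as a quotient of a suitable polynomial or power series extension of $A$, which is still Gorenstein.

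For (b), induct on $i$: assuming $x_1,\dots,x_{i-1}$ is regular on $R^+$, we must show that $x_i u\in(x_1,\dots,x_{i-1})R^+$ forces $u\in(x_1,\dots,x_{i-1})R^+$. Choose a finite $R$-subalgebra $S\subset R^+$ with $u\in S$ and with the relation $x_iu\in(x_1,\dots,x_{i-1})S$ already holding in $S$. Such a $u$ represents a Koszul cycle on $S$, and via the Čech description this cycle produces a local cohomology class for $S$ whose image in $H^i_{\mathfrak m}(R^+)$ must vanish by (a). Unwinding the colimit, there is a finite extension $S'\supset S$ inside $R^+$ in which the cycle becomes a boundary, which exactly says $u\in(x_1,\dots,x_{i-1})S'\subset(x_1,\dots,x_{i-1})R^+$.

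The main obstacle is really the first step of (a): verifying that the hypotheses of Theorem~\ref{module} are preserved under passage to finite extensions $S$ of $R$ inside $R^+$ (especially the Gorenstein-quotient hypothesis when $R$ is not complete), and analogously the passage in (b) from the vanishing of local cohomology classes to the existence of a specific finite $S'\subset R^+$ in which a given Koszul cycle becomes a boundary. Both are handled by the fact that local cohomology is computed by the Čech complex on the parameters, so vanishing in the colimit means death in some member of the directed system, which is precisely the witness we need.
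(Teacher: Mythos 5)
Your proof of part (a) follows the paper's argument exactly: write $R^+$ as a filtered colimit of module-finite subalgebras $S$, note $H^i_{\mathfrak m}(R^+)=\varinjlim H^i_{\mathfrak m}(S)$, and apply Theorem~\ref{module} to each $S$ in the system. Your added bookkeeping (that each $S$ is a Noetherian local domain of the same dimension, and the Gorenstein-quotient hypothesis persists) is the right concern and is handled the way you indicate; the paper leaves this implicit.

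Part (b), however, has a genuine gap. You claim that from $x_i u\in(x_1,\dots,x_{i-1})S$ one extracts a Koszul $1$-cycle for the sequence $x_1,\dots,x_i$ on $S$, that this cycle ``produces a local cohomology class for $S$'' whose image in $H^i_{\mathfrak m}(R^+)$ vanishes, and that unwinding the colimit makes the cycle a boundary. Two things are wrong here. First, a Koszul cycle for $x_1,\dots,x_i$ with $i<d=\dim R$ does not directly give an element of $H^i_{\mathfrak m}(S)$: the \v Cech (Koszul cohomology) complex on $x_1,\dots,x_i$ computes $H^{\bullet}_{(x_1,\dots,x_i)}(S)$, and $(x_1,\dots,x_i)$ is \emph{not} $\mathfrak m$-primary when $i<d$, so $H^{\bullet}_{(x_1,\dots,x_i)}\ne H^{\bullet}_{\mathfrak m}$. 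Second, and more fundamentally, the implication ``$H^i_{\mathfrak m}(R^+)=0$ for $i<d$ $\Rightarrow$ every s.o.p.\ is a regular sequence on $R^+$'' is not a tautology: $R^+$ is not a Noetherian $R$-module, so the usual depth-sensitivity characterization does not apply directly, and vanishing of $H^0_{\mathfrak m}(R^+/(x_1,\dots,x_{i-1})R^+)$ only excludes $\mathfrak m$ as an associated prime — it says nothing about embedded associated primes $P\subsetneq\mathfrak m$ that could still contain $x_i$.

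The paper's proof of (b) supplies precisely what is missing. Using the short exact sequences $0\to R^+/I_{t-1}R^+\xrightarrow{x_t}R^+/I_{t-1}R^+\to R^+/I_tR^+\to 0$ and the vanishing from (a), one shows inductively that $H^q_{\mathfrak m}(R^+/(x_1,\dots,x_t)R^+)=0$ for $q<d-t$; in particular $H^0_{\mathfrak m}(R^+/(x_1,\dots,x_{j-1})R^+)=0$. Then, to exclude embedded primes $P\ne\mathfrak m$, one \emph{localizes} at such a $P$, uses $(R^+)_P\cong(R_P)^+$ and the fact that $R_P$ again satisfies the hypotheses of (a), and runs the same $H^0$-vanishing argument over $R_P$ to get a contradiction. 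Only after all embedded primes are excluded can one conclude that $x_j$, being outside every minimal prime of $(x_1,\dots,x_{j-1})$, is a nonzerodivisor on $R^+/(x_1,\dots,x_{j-1})R^+$. Your argument would need to be replaced by (or shown equivalent to) this localization step to be complete.
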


\emph{Proof.} (a) $R^+$ is the direct limit of the finitely generated $R$-subalgebras $R'$, hence $H^i_{\mathfrak m}(R^+)=\varinjlim H^i_{\mathfrak m}(R')$.
But Theorem~\ref{module} implies that for each $R'$ there is $R''$ such that the map 
 $H^i_{\mathfrak m}(R')\to H^i_{\mathfrak m}(R'')$ in the inductive system is zero. Hence the limit is zero.

(b) Let $x_1,..., x_d$ be a system of parameters of $R$. We prove that $x_1,...,x_j$ is a regular
sequence on $R^+$ by induction on $j$. The case $j=1$ is clear, since $R^+$ is a domain.
Assume that $j>1$ and $x_1,\dots, x_{j-1}$ is a regular sequence on $R^+$. Set $I_t = (x_1,...,x_t)$.
The fact that $H^i_{\mathfrak m}(R^+)=0$ for all $i<d$ and the short exact sequences
$$0\to R^+/I_{t-1}R^+\stackrel{\rm x_t}{\longrightarrow} R^+/I_{t-1}R^+\to R^+/I_{t}R^+\to 0$$ for
$t\leq j-1$ imply by induction on $t$ that $H^q_{\mathfrak m}(R^+/(x_1,...,x_{t})R^+)=0$ for $q<d-t$.
In particular, $H^0_{\mathfrak m}(R^+/(x_1,...,x_{j-1})R^+)=0$ since $0<d-(j-1)$.
Hence $\mathfrak m$ is not an associated prime of $R^+/(x_1,...,x_{j-1})R^+$. This implies that the
only associated primes of $R^+/(x_1,...,x_{j-1})R^+$ are the minimal primes of $R/(x_1,...,x_{j-1})R$.
Indeed, if there is an embedded associated prime, say $P$, then $P$ is the maximal ideal of the ring
$R_P$ whose dimension is bigger than $j-1$ and $P$ is an associated prime of
$(R^+/(x_1,...,x_{j-1})R^+)_P=(R_P)^+/(x_1,...,x_{j-1})(R_P)^+$ which is impossible by the above.
Hence every element of $\mathfrak m$ not in any minimal prime of $R/(x_1,...,x_{j-1})R$, for example,
$x_j$, is a regular element on $R^+/(x_1,...,x_{j-1})R^+$.\qed

\medskip

\begin{disc}
{\rm It is important to understand the huge differences between characteristic $p$, characteristic $0$, and
mixed characteristic.
Suppose that $k$ has characteristic $0$. Let $A$ be a complete Noetherian local integrally closed domain with residue field $k$,
and fraction field $K$. 
If $L$ is any finite field extension of $K$ and $B$ is the integral closure of $A$
in $L$, then the reduced trace map\footnote{The \it reduced trace \rm is defined as $\frac{1}{n} Tr_{L/K}$ where
$L$ is a finite field extension of a field $K$, and $[L:K] = n$. This map fixes the ground field $K$. If
$R$ is an integrally closed domain with fraction field $K$ and $S$ is the integral closure of $R$ in $L$,
then the reduced trace sends $S$ to $R$ and fixes $R$.} gives a splitting of $A$ from $B$, i.e., $B\cong A\oplus N$ as
an $A$-module for some module $A$-module $N$. Then $H^i_{\m}(A)$ splits out of $H^i_{\m}(B)$, so
that the map $H^i_{\m}(A)\to H^i_{\m}(B)$ is \it never \rm zero unless $H^i_{\m}(A) = 0$. Thus the
exact opposite holds in characteristic $0$.}
\end{disc}

What happens in mixed characteristic is a great mystery. One of the great results in recent years was that
of Heitmann. He proved the following theorem:

\begin{thm} Let $(R,\m)$ be a complete three-dimensional Noetherian local integrally closed domain of
mixed characteristic $p\in \mathbb N$. (This means that $p\in \m$.) Then for all $n\geq 1$,
$p^{\frac {1}{n}}$ annihilates the local cohomology $H^2_{\m}(R^+)$.
\end{thm}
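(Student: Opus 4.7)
The plan is to translate the statement into a concrete assertion about ``colon relations'' in module-finite integral extensions of $R$, and then to kill such relations by adjoining $n$-th roots.

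Fix a system of parameters $x_1 = p, x_2, x_3$ of $R$ (possible since $p \in \m$). Since $R^+ = \varinjlim S$ over module-finite $R$-subalgebras $S \subseteq R^+$ and local cohomology commutes with direct limits, $H^2_\m(R^+) = \varinjlim H^2_\m(S)$. Combining this with the standard colon-quotient description of $H^2_\m$, an arbitrary class $\alpha \in H^2_\m(R^+)$ is represented, for some $S$ and some $t \geq 1$, by an equation
\[
x_3^t u \;=\; x_1^t a + x_2^t b, \qquad a, b, u \in S.
\]
To prove $p^{1/n} \alpha = 0$ it then suffices to find a further module-finite extension $T \subseteq R^+$ and an integer $t' \geq t$ such that $x_3^{t'-t}(p^{1/n} u) \in (x_1^{t'}, x_2^{t'}) T$, i.e.\ the image of $p^{1/n} u$ becomes a coboundary at level $t'$ in the direct limit.

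Since no Frobenius is available in mixed characteristic (contrast Lemma~\ref{element} and Theorem~\ref{module}), my substitute would be iterated extraction of $n$-th roots. Concretely, I would solve monic equations of the form $W^n = p \cdot g$ in $R^+$, where $g$ is a carefully chosen polynomial expression built from $u, a, b, x_2, x_3$; each such $W$ is integral over $S$ and carries one factor of $p^{1/n}$. The algebraic manipulation then aims to rewrite $p^{1/n} u$ inside $(x_1^{t'}, x_2^{t'}) T$ by substituting these roots for their defining polynomials. To reduce complexity, I would localize at a height-one prime $P \subseteq R$ avoiding $p$: there $R_P$ is a complete mixed-characteristic DVR whose absolute integral closure is classical, while $R/P$ is a two-dimensional complete local domain where $S_2$ normality considerations are effective. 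The plan is to kill the class at each such height-one prime separately and then patch the resulting local extensions using the normality of $R$.

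The main obstacle is precisely this patching step. Producing a single $T$ that works uniformly, while only multiplying $\alpha$ by $p^{1/n}$ rather than by a higher power of $p$, requires delicate bookkeeping of which roots enter which denominators of the \v{C}ech cocycle and of how they interact with the original equation $x_3^t u = x_1^t a + x_2^t b$. This calculus is what restricts the theorem to dimension three: the one-dimensional generic fiber at a height-one prime is central to the argument, and no comparable reduction is available in dimension $\geq 4$, where the analogous statement is still open.
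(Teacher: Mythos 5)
The paper does not prove this theorem: it is quoted, with attribution, from Heitmann's Annals paper \cite{He} (``The direct summand conjecture in dimension three''), and the surrounding text explicitly says that Heitmann's result is even slightly stronger. So there is no ``paper's own proof'' for your sketch to be compared against --- and more to the point, your sketch is not a proof. You have identified a plausible-sounding reduction (a colon description of a class in $H^2_\m(R^+)$ together with the hope of killing it by adjoining $n$th roots of $p$), but you then say, correctly, that ``the main obstacle is precisely this patching step.'' That obstacle is not a technical loose end; it is the entire content of Heitmann's theorem, and the reason it took a decade to prove and settled the direct summand conjecture in dimension three. A blind proof that openly defers the hard part is a gap, not a proof.

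Beyond that global point, the specific reduction you propose does not work. First, if $P$ is a height-one prime of $R$ not containing $p$, then $R_P$ contains $\mathbb{Q}$, so it is an \emph{equicharacteristic zero} DVR, not a mixed-characteristic one; the ``classical'' structure of its absolute integral closure is the Newton--Puiseux picture, which tells you nothing about $p^{1/n}$. Second, and more seriously, $H^2_\m(R^+)$ is $\m$-power torsion, so its localization at \emph{every} non-maximal prime is automatically zero; there is nothing to ``kill at each height-one prime separately,'' and hence nothing to patch --- localization at smaller primes gives no purchase on this module. Heitmann's actual argument is not a local--global patching argument of this kind at all: it is a direct, intricate, iterative construction of module-finite extensions inside $R^+$, with delicate valuation-theoretic bookkeeping controlling exactly which fractional powers of $p$ enter at each stage of a colon-capturing induction. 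Your instinct that the colon description of $H^2_\m$ and the extraction of roots of $p$ are the right raw materials is sound, but the argument you outline does not assemble them into anything, and the specific structural claims (mixed-characteristic $R_P$, meaningful localization of $H^2_\m$) are false.
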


Heitmann's theorem is slightly stronger than this, but this is the essential result of \cite{He}. In characteristic
$p$ we know this local cohomology module is zero, but this is not known in mixed characteristic.

One can hope that if $R$ has mixed characteristic $p$, then $R^+/pR^+$ is Cohen-Macaulay. Of course, this
ring has positive characteristic. However, perhaps this is too much to hope for. The next best result would
be to conjecture that $R^+/\sqrt{pR^+}$ is Cohen-Macaulay, a question raised by Lyubeznik. He has a partial
result in this direction \cite{Ly}:

\begin{thm} Let $(R,\m)$ be a Noetherian local excellent domain of mixed characteristic $p$. Assume the
dimension of $R$ is at least $3$.  Set $\overline{R} = R/\sqrt{pR}$ and $\overline{R^+} = R^+/\sqrt{pR^+}$.
Then $H^1_{\m}(\overline{R^+}) = 0$, and every part of a system of parameters $a,b$ of $\overline{R}$
form a regular sequence on $\overline{R^+}$.
\end{thm}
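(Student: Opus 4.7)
The plan is to use Artin's theorem to identify $\overline{R^+}$ with the absolute integral closure of a complete local domain of characteristic $p$ of dimension $\dim R - 1 \geq 2$, and then deduce both claims from Corollary~\ref{corcm}.

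Assuming $R$ is complete (a standard faithful flatness reduction), $R^+$ is local, since module-finite extension domains of complete local rings are complete local and $R^+$ is a filtered union of such. Iterating Artin's theorem inside this local ring shows that the sum of any finite collection of primes of $R^+$ is again prime; hence any two distinct minimal primes over $pR^+$ would sum to a strictly larger prime, contradicting minimality. So there is a unique minimal prime $Q$ over $pR^+$, whence $\sqrt{pR^+} = Q$ is prime and $\overline{R^+} = R^+/Q$ is a domain. Exercise~7 at the end of Section~2 then yields the identification
\[
\overline{R^+} = R^+/Q \cong A^+, \qquad A := R/(Q \cap R);
\]
a brief lying-over argument (any minimal prime $P \subseteq Q \cap R$ of $pR$ lifts to a prime $\widetilde Q$ of $R^+$ containing $pR^+$, hence containing $Q$, forcing $P = \widetilde Q \cap R \supseteq Q \cap R \supseteq P$) shows $Q \cap R$ is a minimal prime of $pR$. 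Since $R$ is an excellent complete local domain it is equidimensional and catenary, so $\dim A = \dim R - 1 \geq 2$; and since $A$ is a complete local ring of characteristic $p$, Cohen's structure theorem makes $A$ a surjective image of a regular (hence Gorenstein) local ring, and Corollary~\ref{corcm} applies to $A$.

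The vanishing $H^1_\m(\overline{R^+}) = 0$ is then immediate: by independence of base (Proposition~\ref{propbasechange}), $H^1_\m(\overline{R^+}) = H^1_{\m'}(A^+)$, since the surjection $R \twoheadrightarrow A$ carries $\m$ onto the maximal ideal $\m'$ of $A$ and hence $\m\overline{R^+} = \m'\overline{R^+}$; and $H^1_{\m'}(A^+) = 0$ by Corollary~\ref{corcm}(a), since $\dim A \geq 2$. For the regular sequence claim, let $a, b$ be part of a system of parameters of $\overline R$, and let $\bar a, \bar b \in A$ be their images under $\overline R \twoheadrightarrow A$. The action of $\overline R$ on $\overline{R^+}$ factors through $A$, so it suffices to show $\bar a, \bar b$ is part of a system of parameters of $A$; Corollary~\ref{corcm}(b) will then give that $\bar a, \bar b$, and hence $a, b$, is a regular sequence on $A^+ = \overline{R^+}$. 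For this, $\overline R$ is itself catenary and equidimensional: its minimal primes are $P/\sqrt{pR}$ for minimal primes $P$ of $pR$, and these all have coheight $\dim R - 1$ by equidimensionality of $R$. In a catenary equidimensional local ring $S$ one has $\height_S(\widetilde Q) + \dim S/\widetilde Q = \dim S$ for every prime $\widetilde Q$, and applying this in both $\overline R$ and $A$ gives $\height_{\overline R}(\widetilde Q) = \height_A(\widetilde Q/(Q \cap R))$ for primes $\widetilde Q$ of $\overline R$ containing $(Q \cap R)/\sqrt{pR}$. Since every prime of $A$ containing $(\bar a, \bar b)$ comes from such a $\widetilde Q$ containing $(a, b)$, the hypothesis $\height_{\overline R}(a, b) \geq 2$ forces $\height_A(\bar a, \bar b) \geq 2$, as needed.

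The main obstacle is the first step: establishing that $\sqrt{pR^+}$ is a prime ideal. This is where Artin's theorem does the essential work, cleanly converting the mixed characteristic statement into a characteristic $p$ one in dimension $\dim R - 1$. Once this identification is in place, everything else is a fairly mechanical application of Corollary~\ref{corcm} together with standard dimension theory for excellent local rings.
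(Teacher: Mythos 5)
The paper does not actually prove this theorem --- it is only quoted from Lyubeznik's preprint \cite{Ly} --- so there is no in-paper argument to compare against. But your proposal contains a fatal error at its foundation, so let me address that directly.

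Your entire strategy rests on the claim that $\sqrt{pR^+}$ is a prime ideal $Q$, which would let you invoke Exercise~7 to identify $\overline{R^+}$ with $(R/(Q\cap R))^+$ and then apply Corollary~\ref{corcm}. The argument you give for primeness is a non sequitur: you say two distinct minimal primes $P_1,P_2$ over $pR^+$ would, by Artin's theorem, sum to a strictly larger prime $P_1+P_2$, ``contradicting minimality.'' But minimality of $P_1$ over $pR^+$ only forbids primes strictly \emph{between} $pR^+$ and $P_1$; it says nothing about primes strictly \emph{above} $P_1$. Artin's theorem gives no upper bound on $P_1+P_2$, so no contradiction is obtained. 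Worse, the claim itself is false. Take $R=\mathbb{Z}_p[[x,y,z]]/(xy-p)$, a complete (hence excellent) regular local domain of dimension $3$ and mixed characteristic $p$. Then $pR=xyR$, and since $R$ is a regular local ring with $R/xR\cong \mathbb{F}_p[[y,z]]$ and $R/yR\cong\mathbb{F}_p[[x,z]]$ both domains, already $\sqrt{pR}=(x)\cap(y)$ fails to be prime. Nor is $\sqrt{pR^+}$ prime: if $x^n\in pR^+$ then $x^n/p=x^{n-1}/y$ lies in $R^+\cap\operatorname{Frac}(R)=R$ by normality, forcing $x^{n-1}\in yR$, which is false in $\mathbb{F}_p[[x,z]]$; so $x\notin\sqrt{pR^+}$, and symmetrically $y\notin\sqrt{pR^+}$, yet $xy=p\in\sqrt{pR^+}$. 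Thus $\overline{R^+}$ is not a domain, and in particular is not the absolute integral closure of any characteristic-$p$ domain $A$; the reduction to Corollary~\ref{corcm} collapses at the start. (Note also that the theorem's own statement $\overline R=R/\sqrt{pR}$ already accommodates this: $\overline R$ itself need not be a domain.) A lesser point: the opening ``standard faithful flatness reduction'' to $R$ complete also needs care, since the completion of an excellent local domain need not be a domain. The genuine content of Lyubeznik's theorem lies precisely in handling $R^+/\sqrt{pR^+}$ as a ring which is \emph{not} an absolute integral closure, and a correct proof must engage with that directly rather than reduce to Corollary~\ref{corcm}.
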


\bigskip

Another interesting question is whether or not one must use inseparable extensions to trivialize local cohomology.
In fact, Anurag Singh \cite{Si} found the following nice trick to change inseparable elements to separable.

\begin{prop} Let $R$ be an excellent domain of characteristic $p > 0$, and let $I$ be an ideal of $R$. Suppose that
$z\in R$ is such that $z^q\in I^{[q]}$, where $ q = p^e$ is a power of $p$. Then there exists an integral domain
$S$, which is a module-finite separable extension of $R$, such that $z\in IS$.
\end{prop}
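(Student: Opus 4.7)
My plan is to reduce the statement to the case $q=p$ by induction on $e$ (where $q=p^e$), and then for the base case to construct a separable extension by adjoining roots of carefully chosen Artin--Schreier-like polynomials whose derivatives in $T$ are nonzero.

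For the inductive reduction, suppose $e\geq 2$. Rewriting the hypothesis $z^{p^e}\in I^{[p^e]}$ as $(z^{p^{e-1}})^p\in(I^{[p^{e-1}]})^{[p]}$, the base case $q=p$ applied to the element $z^{p^{e-1}}$ and the ideal $I^{[p^{e-1}]}$ yields a module-finite separable extension $S_1\supset R$ with $z^{p^{e-1}}\in I^{[p^{e-1}]}S_1=(IS_1)^{[p^{e-1}]}$. Applying the proposition inductively within $S_1$, with ideal $IS_1$ and exponent $e-1$, then produces the required $S$.

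For the base case $q=p$, write $z^p=r_1x_1^p+\cdots+r_nx_n^p$ with $I=(x_1,\ldots,x_n)$. For each $i$ with $x_i\neq 0$, let $y_i\in\overline K$ be a root of
$$f_i(T)=T^p-x_iT-r_i\in R[T].$$
Since $f_i'(T)=-x_i\neq 0$ in $R$, the polynomial $f_i$ is separable over $\mathrm{Frac}(R)$, so $y_i$ generates a finite separable extension. Set $S_0:=R[y_1,\ldots,y_n]\subset\overline K$; as a subring of a field, $S_0$ is a domain, and it is module-finite and separable over $R$. Using the relations $y_i^p=x_iy_i+r_i$, the element $w:=\sum_iy_ix_i\in IS_0$ satisfies
$$w^p=\sum_i y_i^px_i^p=\sum_i(x_iy_i+r_i)x_i^p=z^p+\sum_iy_ix_i^{p+1},$$
so that $(w-z)^p=\sum_i y_ix_i^{p+1}\in I\cdot I^{[p]}S_0$.

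To finish, it suffices to find a further module-finite separable extension $S\supset S_0$ with $w-z\in IS$, for then $z=w-(w-z)\in IS$. The natural idea is to iterate the above construction on the residue $v:=w-z\in S_0$, whose $p$-th power $v^p=\sum_i(x_iy_i)x_i^p$ is again expressed as a combination $\sum r_i'x_i^p$ of the generators $x_i^p$, with new coefficients $r_i'=x_iy_i\in IS_0$; adjoining fresh separable elements $y_i'$ satisfying $(y_i')^p-x_iy_i'=x_iy_i$ produces an extension $S_1\supset S_0$ and a new residue with $p$-th power in $I\cdot I^{[p]}S_1$. The main obstacle is showing that this iteration closes up after finitely many steps into a single module-finite separable extension of $R$, rather than producing an infinite tower. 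Singh's trick resolves this by exploiting that the new right-hand sides $r_i^{(k)}$ at each stage lie in the previous $IS_k$, so that the residues become progressively deeper in the $I$-adic filtration; combined with the excellence of $R$, Krull's intersection theorem applied inside the direct limit (which sits in $R^+$) forces the residue to vanish at some finite stage, at which point $z\in IS$ for the corresponding module-finite separable $S$.
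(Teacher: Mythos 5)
Your instinct to build a separable extension by adjoining roots of Artin--Schreier-type polynomials (separable because the derivative in $T$ is a nonzero constant in $T$) is exactly the right one, and your base-case setup correctly produces a first approximation $w$ with $(w-z)^p\in I\cdot I^{[p]}S_0$. However, the argument has a genuine gap at the iteration step, and the gap is not cosmetic.

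Concretely, repeating your computation at the next stage gives $(v')^p=\sum_i(x_iy_i')x_i^p$ with $x_iy_i'\in IS_1$, which is again only in $I\cdot I^{[p]}S_1$. The residues do \emph{not} sink deeper into the $I$-adic filtration; they are stuck at the same depth at every stage, so there is no $I$-adic argument available. Even setting that aside, a Krull-intersection argument would not close things up: each residue $v_k$ lives in a different ring $S_k$, the union $\bigcup_k S_k$ need not be Noetherian or module-finite over $R$, and $\bigcap_n I^n=0$ only says a sequence with $v_k\in I^k$ converges $I$-adically to zero, not that it vanishes at a finite stage. Finally, even if the residue did vanish at stage $N$, you would have $z\in IS_N$ only for a possibly very large $N$, and you would still need $S_N$ to be module-finite --- which it is, but the termination itself is the missing content.

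The paper's (Singh's) proof avoids iteration entirely by a subtly different choice of auxiliary polynomials: with $z^q=\sum_{j=1}^n a_jx_j^q$, one adjoins for each $i\geq 2$ a root $u_i$ of $U_i^q+U_ix_1^q-a_i=0$, using the \emph{same} linear coefficient $x_1^q$ in every equation (not $x_i$ or $x_i^q$). Let $S$ be the integral closure of $R[u_2,\dots,u_n]$, which is module-finite over $R$ by excellence. Defining $u_1=(z-\sum_{i\geq 2}x_iu_i)/x_1\in\mathrm{Frac}(S)$ and raising to the $q$th power, the uniform $x_1^q$ factors out cleanly and one finds $u_1^q=a_1+\sum_{i\geq 2}u_ix_i^q$, a monic equation with coefficients in $S$; hence $u_1\in S$ because $S$ is integrally closed. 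Then $z=\sum_ix_iu_i\in IS$ in a single step. The reduction to $q=p$ that opens your proposal is also unnecessary, since this argument handles arbitrary $q=p^e$ directly. Your write-up would be repaired by replacing the polynomials $T^p-x_iT-r_i$ with $T^q+x_1^qT-a_i$ for $i\geq 2$ and then defining $u_1$ by the linear relation and checking its integrality, rather than attempting to iterate.
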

\bigskip

The point here is that there clearly a finite inseparable extension of $R$, say $T$, such that $z\in IT$. Simply
take $qth$ roots of the elements $a_j$ such that $z = \sum a_jx_j^q$ where $x_j\in I$. 

\begin{proof}  Write $z = \sum_{1\leq j\leq n} a_jx_j^q$ where $x_j\in I$ as above. Consider the equations for $2\leq i\leq n$,
$$ U_i^q + U_ix_1^q-a_i = 0.$$
These are monic separable equations and therefore have roots $u_i$ in a separable field extension of the fraction field
of $R$. Let $S$ be the integral closure of the ring $R[u_2,...,u_n]$. 
Since $R$ is excellent, $S$ is finite as an $R$-module. We claim that $z\in IS$. Set
$$u_1 = (z- \sum_{2\leq i\leq n} x_iu_i)/x_1.$$
Note that $u_1$ is an element of the fraction field of $S$. Taking $qth$ powers we see that
$$u_1^q = a_1 + \sum_{2\leq i\leq n} u_ix_i^q.$$
Therefore $u_1$ is integral over $S$. As $S$ is integrally closed, $u_1\in S$. This implies that $$z = \sum_{1\leq i\leq n} u_ix_i$$
and so $z\in IS$. \end{proof}

\begin{disc}{\rm  There is an interesting property pertaining to our main theorem. Suppose that
$(R,\m)$ is a complete local Noetherian domain of positive characteristic, and let $x_1,...,x_d$ form
a regular sequence. If $x_1,...,x_d$ is a system of parameters, and if $R$ is not Cohen-Macaulay, then
there is a non-trivial relation $r_1x_1+...+r_dx_d = 0$. Non-trivial means that it does not
come from the Koszul relations. Since $R^+$ is Cohen-Macaulay, we can trivialize this relation in $R^+$,
and therefore in some finite extension ring $S$ of $R$, $R\inc S\inc R^+$. But Theorem~\ref{module} does not say
whether or not there is a fixed finite extension ring $T$, $R\inc T\inc R^+$ in which all relations on all
parameters of $R$ become simultaneously trivial. Even if such a ring $T$ exists, this does not mean $T$ is
itself Cohen-Macaulay; new relations coming from elements of $T$ may be introduced. However, there is a finite
extension which simultaneously trivializes all relations on systems of parameters. This fact has been proved by
Melvin Hochster and Yongwei Yao \cite{HY}.}
\end{disc}

\section{Applications}
\bigskip

The existence of a big Cohen-Macaulay algebra has a great many applications. In some sense
it repairs the failure of a ring to be Cohen-Macaulay. Hochster proved and used the existence of big Cohen-Macaulay
modules (the word ``big" refers to the fact the modules may not be finitely generated) to prove many of the homological
conjectures. For a modern update, see \cite{Ho1}. In general, if you can prove a theorem in the
Cohen-Macaulay case, you should immediately try to use $R^+$ to try to prove it in general.
We give several examples of this phenomena in this section.  As examples, we will prove some
of the old homological conjectures using this approach; this is not new, but there are currently
a growing number of new homological conjectures, and it could be that characteristic $p$ methods
apply.

Of course, some of the homological conjectures deal directly with systems of parameters. These are
easy to prove once one has a Cohen-Macaulay module. For example, the next theorem gives the
monomial conjecture.

\begin{thm} Let $R$ be a local Noetherian ring of dimension $d$ and positive characteristic $p$.
Let $x_1,...,x_d$ be a system of parameters. Then for all $t\geq 1$, $(x_1\cdots x_d)^t$ is not
in the ideal generated by $x_1^{t+1},...,x_d^{t+1}$.
\end{thm}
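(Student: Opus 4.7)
The plan is to reduce to the setting where Corollary~\ref{corcm} applies and then exploit the fact that in any ring, a regular sequence whose ideal is proper automatically satisfies the monomial conjecture.

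First I would reduce to the case where $R$ is a complete local Noetherian domain. Suppose for contradiction that $(x_1\cdots x_d)^t \in (x_1^{t+1},\ldots,x_d^{t+1})$ holds in $R$. This relation is preserved under the faithfully flat map $R\to\hat R$, and then under passage to $\hat R/P$ for any prime $P$. Choosing $P$ to be a minimal prime of $\hat R$ with $\dim(\hat R/P)=d$, the images of $x_1,\ldots,x_d$ form a system of parameters in $\hat R/P$, because $(x_1,\ldots,x_d)\hat R$ is $\mathfrak m\hat R$-primary. Replacing $R$ by $\hat R/P$, I am reduced to the case of a complete local Noetherian domain of dimension $d$ and characteristic $p$. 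By the Cohen structure theorem this is a homomorphic image of a regular (hence Gorenstein) local ring, so Corollary~\ref{corcm} applies and $x_1,\ldots,x_d$ is a regular sequence on $R^+$. Moreover $R^+$ is a local ring with maximal ideal containing each $x_i$, since every integral extension domain of a complete local ring is local.

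Next I would establish the following purely algebraic fact: if $A$ is any ring and $y_1,\ldots,y_d$ is a regular sequence in $A$ such that $(y_1,\ldots,y_d)A$ is a proper ideal, then $(y_1\cdots y_d)^t\notin (y_1^{t+1},\ldots,y_d^{t+1})A$ for every $t\ge 1$. This is standard and will be proved by induction on $d$. For $d=1$, $y_1^t=ry_1^{t+1}$ forces $(1-ry_1)y_1^t=0$; since $y_1$ (and hence $y_1^t$) is a non-zero-divisor, $1=ry_1$, contradicting that $(y_1)$ is proper. For the inductive step, rewrite
\[
y_d^t\bigl((y_1\cdots y_{d-1})^t-r_d y_d\bigr)=\sum_{i=1}^{d-1} r_i y_i^{t+1}.
\]
Since powers of a regular sequence form a regular sequence (and in particular $y_1^{t+1},\ldots,y_{d-1}^{t+1},y_d^t$ is regular), one obtains $(y_1\cdots y_{d-1})^t-r_d y_d \in (y_1^{t+1},\ldots,y_{d-1}^{t+1})$. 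Reducing modulo $y_d$ yields $(\bar y_1\cdots\bar y_{d-1})^t\in(\bar y_1^{t+1},\ldots,\bar y_{d-1}^{t+1})$ in $A/y_dA$, and $\bar y_1,\ldots,\bar y_{d-1}$ is a regular sequence there whose ideal is still proper; the inductive hypothesis delivers the contradiction.

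Applying this fact with $A=R^+$ and $y_i=x_i$, which is legitimate by Corollary~\ref{corcm} and the locality of $R^+$, I conclude that $(x_1\cdots x_d)^t\notin (x_1^{t+1},\ldots,x_d^{t+1})R^+$. But the relation assumed in $R$ extends to $R^+$, which is the desired contradiction. I expect the only real subtlety to be the reduction step: one must verify that the monomial relation descends through completion and quotient by a minimal prime of maximal coheight, and that the images of the $x_i$ remain a system of parameters — this is where the complete/domain/Gorenstein-quotient hypotheses of Corollary~\ref{corcm} are actually needed. The Cohen-Macaulay part of the argument, once in place, is essentially automatic.
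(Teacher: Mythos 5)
Your reduction to a complete local domain and your appeal to Corollary~\ref{corcm} are exactly what the paper does. Where you diverge is in how the induction is run, and there is a gap in the way you run it.

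You assert, as a ``purely algebraic fact,'' that for \emph{any} ring $A$ and \emph{any} regular sequence $y_1,\dots,y_d$ with $(y_1,\dots,y_d)A$ proper, one has $(y_1\cdots y_d)^t\notin(y_1^{t+1},\dots,y_d^{t+1})A$, and you prove this by induction on $d$ by passing to $A/y_dA$. The inductive step needs $\bar y_1,\dots,\bar y_{d-1}$ to be a regular sequence on $A/y_dA$; this is equivalent to $y_d,y_1,\dots,y_{d-1}$ being a regular sequence on $A$. But permuting a regular sequence need not give a regular sequence outside the Noetherian local setting, and $R^+$ is very far from Noetherian, so this is not automatic and you have not argued it. (The claim is also asserted for arbitrary $A$, where it is simply unjustified.) In your actual application the gap happens to close: $x_d,x_1,\dots,x_{d-1}$ is again a system of parameters of $R$, so Corollary~\ref{corcm} says it too is a regular sequence on $R^+$; to make the induction run all the way down you should observe once and for all that \emph{every} permutation of $x_1,\dots,x_d$ is a system of parameters and hence a regular sequence on $R^+$. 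With that sentence inserted, your argument becomes correct for $A=R^+$; but the lemma as you stated it, for ``any ring,'' is not established by your proof.

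The paper sidesteps this entirely by never running the induction inside $R^+$. After obtaining
$(x_1\cdots x_{d-1})^t - s_dx_d\in(x_1^{t+1},\dots,x_{d-1}^{t+1})R^+$ exactly as you do, it descends to a module-finite Noetherian complete local domain $S$ with $R\subseteq S\subseteq R^+$ in which this membership already holds, passes to $S/x_dS$, a $(d-1)$-dimensional Noetherian local ring where $x_1,\dots,x_{d-1}$ is a system of parameters, and applies the \emph{theorem's} induction hypothesis (on dimension) to $S/x_dS$. That route keeps the induction in the Noetherian world, where no permutability question arises, at the modest cost of having to notice that the $R^+$-membership descends to a finite level. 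Either route works; yours requires the extra observation about permutations of systems of parameters, and you should not present the abstract lemma as holding for arbitrary rings without either adding a permutability hypothesis or giving a proof that avoids the reduction modulo $y_d$.
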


\begin{proof} We use induction on the dimension $d$ of $R$. The case $d = 1$ is trivial.
Suppose by way of contradiction that $d > 1$ and  $(x_1\cdots x_d)^t\in (x_1^{t+1},...,x_d^{t+1})$.
This is preserved after completion, and is further preserved after moding out a minimal prime $P$
such that the dimension of the completion modulo $P$ is still $d$. After these operations,
the images of the elements $x_i$ still form a system of parameters as well.  Thus we may assume that
$R$ is a complete local domain. We apply Theorem~\ref{module} to conclude that $x_1,...,x_d$ is
a regular sequence in $R^+$. Write
$$(x_1\cdots x_d)^t = \sum_i s_ix_i^{t+1},$$
where $s_i\in R$. Then $x_d^t((x_1\cdots x_{d-1})^t - s_dx_d)\in (x_1^{t+1},...,x_{d-1}^{t+1})$.
Since the powers of the $x_i$ also form a regular sequence in $R^+$, we conclude that
$(x_1\cdots x_{d-1})^t - s_dx_d\in (x_1^{t+1},...,x_{d-1}^{t+1})R^+$. It follows that
there is a Noetherian complete local domain $S$ containing $R$ and module-finite over $R$
such that $(x_1\cdots x_{d-1})^t\in (x_1^{t+1},...,x_{d-1}^{t+1},x_d)S$. But now
$(x_1\cdots x_{d-1})^t$ is in the ideal $(x_1^{t+1},...,x_{d-1}^{t+1})$ in the ring $S/x_dS$,
which has dimension $d-1$. Our induction shows that this is impossible. \end{proof}

Next, we apply Theorem~\ref{module} it to various intersection theorems. One of the first such intersection conjectures
was: 

\begin{conj} Let $(R,\m)$ be a local Noetherian ring, and let $M,N$ be two finitely
generated nonzero $R$-modules such that $M\otimes_RN$ has finite length. Then
$$\dim N \leq \text{pd}_R(M).$$
\end{conj}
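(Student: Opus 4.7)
We may assume $s:=\text{pd}_R(M)$ is finite (otherwise there is nothing to prove), and we must establish $\dim N \leq s$. The strategy is to deduce the statement from the Cohen--Macaulay property of $R^+$ proved in Corollary~\ref{corcm}, via the existence of a balanced big Cohen--Macaulay algebra in positive characteristic.

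I would begin with the standard reductions. By a reduction-mod-$p$ argument, packaging $R$, $M$, $N$, and a finite free resolution of $M$ into data over a finitely generated $\Z$-subalgebra and then localizing at a maximal ideal of residue characteristic $p$, we may assume $\charac(R)=p>0$. Completing at $\m$ preserves $s$, $\dim N$, and the finite-length hypothesis on $M\otimes_R N$, so we may further assume $R$ is complete; and by replacing $R$ by $R/Q$ for a minimal prime $Q$ with $\dim R/Q=\dim R$ (and $M$, $N$ by the corresponding quotients), we may assume $R$ is a complete local domain. Thus the hypotheses of Corollary~\ref{corcm} are in force.

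Now let $d:=\dim N$ and choose a prime $P\in\operatorname{Supp}(N)$ with $\dim R/P=d$. Let $y_1,\dots,y_d\in R$ lift a system of parameters of $R/P$ and extend to a full system of parameters $y_1,\dots,y_n$ of $R$, where $n=\dim R$. By Corollary~\ref{corcm}(b), this sequence is a regular sequence on $R^+$. To derive a contradiction if $d>s$, I would form the double complex $F_\bullet\otimes_R K_\bullet(y_1,\dots,y_d;R^+)$, where $F_\bullet$ is a minimal free resolution of $M$ of length $s$. Filtering one way uses the acyclicity of the Koszul complex on $R^+$ to show the total cohomology is concentrated in degrees $\leq s$; filtering the other way uses that $M\otimes_R N$ is nonzero of finite length to produce a surviving class in total degree $d$. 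Comparing the two spectral sequences forces $d\leq s$.

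The main obstacle will be the final spectral-sequence comparison, because the naive route of applying the New Intersection Theorem to $F_\bullet\otimes_R R/P$ can fail: if $P\neq\m$ then $M_P=0$ (since $\operatorname{Supp}(M)\cap\operatorname{Supp}(N)=\{\m\}$), so $M\otimes_R R/P$ may vanish and there is no nonzero homology to detect. The cleanest way around this is to use the Improved New Intersection Theorem: one derives it from the existence of the balanced big Cohen--Macaulay algebra $R^+$ (a formal consequence of Corollary~\ref{corcm}), selects a minimal generator of $M$ whose image in $M\otimes_R N$ is nonzero of finite length (guaranteed by the hypothesis), and applies the Improved New Intersection Theorem to $F_\bullet$ to conclude $\dim N\leq s$.
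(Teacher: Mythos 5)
The paper only establishes this statement in positive prime characteristic (Theorem~\ref{newintersection}), so the comparison below is with that proof. Your proposal diverges from the paper's argument in several ways, and one of its pivotal steps rests on a mistaken worry.

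First, the specific concern you raise to justify abandoning the ``naive route'' is not valid. You write that since $M_P=0$ for $P\in\operatorname{Supp}(N)$, $P\ne\m$, the module $M\otimes_R R/P$ may vanish. But $M\otimes_R R/P = M/PM$, and this is \emph{never} zero: $P\subseteq\m$ and $M$ is a nonzero finitely generated module over the local ring $R$, so $PM\subseteq\m M\subsetneq M$ by Nakayama. Vanishing of $M_P$ means $\operatorname{Ann}(M)\not\subseteq P$, which is an entirely different condition from $PM=M$. Moreover $\operatorname{Supp}(\operatorname{Tor}_i^R(M,R/P))\subseteq\operatorname{Supp}(M)\cap\operatorname{Supp}(R/P)\subseteq\{\m\}$, so all the homology of $F_\bullet\otimes_R R/P$ has finite length. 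Thus there is no obstruction of the type you fear, and consequently no need for the detour through the Improved New Intersection Theorem. That detour is itself problematic here: you invoke the Improved New Intersection Theorem as a black box, merely asserting it is ``a formal consequence of Corollary~\ref{corcm}.'' Deriving it from the existence of big Cohen--Macaulay algebras is a real argument that the paper neither performs nor assumes, so in the context of this paper you would be citing the conclusion rather than proving it.

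Second, the paper's own proof is considerably more direct and does not pass through the New Intersection Theorem or a double-complex spectral sequence at all. After completing $R$, one picks a prime $P\supseteq\operatorname{Ann}(N)$ with $\dim R/P=\dim N=d$, replaces $N$ by $R/P$, and chooses $x_1,\dots,x_d\in\operatorname{Ann}(M)$ whose images form a system of parameters of the complete local domain $R/P$. Setting $S=(R/P)^+$, Corollary~\ref{corcm}(b) makes $x_1,\dots,x_d$ a regular sequence on $S$. One observes $\operatorname{Tor}_0^R(S,M)=S\otimes_R M\ne 0$ (this survives even after tensoring with the residue field of $S$), and then uses the short exact sequences
$$0\to S/(x_1,\dots,x_i)S \xrightarrow{\ x_{i+1}\ } S/(x_1,\dots,x_i)S \to S/(x_1,\dots,x_{i+1})S\to 0$$
together with the fact that each $x_{i+1}$ annihilates $M$ to produce surjections $\operatorname{Tor}_{i+1}^R(S/(x_1,\dots,x_{i+1})S,M)\twoheadrightarrow\operatorname{Tor}_i^R(S/(x_1,\dots,x_i)S,M)$. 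Inductively $\operatorname{Tor}_d^R(-,M)\ne 0$, contradicting $\operatorname{pd}_R(M)<d$. This elementary Tor chase is exactly the mechanism your spectral-sequence sketch is groping toward, but it sidesteps the flatness issues your filtration argument would face (since $R^+$ is not flat over a non-regular $R$, the filtration in the ``$F$-direction'' does not degenerate as cleanly as you suggest).

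Finally, your opening reduction-mod-$p$ step is not part of the paper's argument and would not, in any case, give the full conjecture: it can handle equicharacteristic zero, but the mixed characteristic case lies outside that technique entirely. Within this paper's scope, the right move is simply to assume $\operatorname{char}(R)=p>0$ from the outset, as the paper does.
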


Of course there is nothing to prove if the projective dimension of $M$ is infinite.
We prove (see \cite{Ho}):

\begin{thm}\label{newintersection} Let $(R,\m)$ be a local Noetherian ring of positive prime characteristic $p$, and let $M,N$ be two finitely
generated nonzero $R$-modules such that $M\otimes_RN$ has finite length. Then
$$\dim N \leq \text{pd}_R(M).$$
\end{thm}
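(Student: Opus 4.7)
The plan is to reduce the statement to the case where $R$ is a complete local domain and then invoke Corollary~\ref{corcm} to realize a sufficiently long regular sequence inside a ``Cohen-Macaulay cover'' $B$ of $R$, from which a double-complex argument yields the needed inequality.

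First I would make the standard reductions. Completing $R$ preserves both $\pd_R M$ and $\dim N$ (via faithful flatness and the local-ring behavior of dimension), so WLOG $R$ is complete. Choose $P \in \Supp(N)$ with $\dim R/P = s := \dim N$; since
$$\Supp(M \otimes_R R/P) \subseteq \Supp M \cap V(P) \subseteq \Supp M \cap \Supp N \subseteq \{\mathfrak{m}\}$$
(using $V(P) \subseteq \Supp N$ because $N$ is finitely generated), the quotient $M/PM$ has finite length, so we may replace $N$ by $R/P$. Passing to a minimal prime $\mathfrak{q} \subseteq P$ of $R$, the complete local domain $\bar R := R/\mathfrak{q}$ satisfies the hypotheses of Corollary~\ref{corcm}, so $B := \bar R^+$ becomes an $R$-algebra via $R \twoheadrightarrow \bar R \hookrightarrow B$, and every system of parameters of $\bar R$ acts on $B$ as a regular sequence.

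Next I would lift a system of parameters of $R/P$ to elements $x_1, \ldots, x_s \in R$; these form a partial system of parameters of $\bar R$, so by Corollary~\ref{corcm}(b) they are a regular sequence on $B$. Assume for contradiction that $p := \pd_R M < s$, take a finite free $R$-resolution $F_\bullet \to M$ of length $p$, and form the double complex $F_\bullet \otimes_R K^\bullet(\underline{x}; B)$. Its total complex computes $\mathrm{Tor}^R_*(M, B/(\underline{x})B)$ via the Koszul-first spectral sequence (which collapses because $\underline{x}$ is $B$-regular), and this vanishes above degree $p$ since $\pd_R M = p$. The resolution-first spectral sequence has $E^2_{i,j} = H_j(\underline{x}; \mathrm{Tor}_i^R(M, B))$, supported in $i \leq p$ and converging to the same total homology.

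The contradiction will arise from the finite length of $M/PM$. Since $\underline{x}$ lifts an s.o.p.\ of $R/P$, we have $\mathfrak{m}^N \subseteq (\underline{x}) + P$ for some $N$; hence $(M \otimes_R B)/((\underline{x}) + P)(M \otimes_R B) \cong (M/((\underline{x})+P)M) \otimes_R B$ is nonzero (the first factor is a nonzero finite-length $R$-module, and $B/\mathfrak{m}B \neq 0$ because $\mathfrak{m}B$ lies in the unique maximal ideal of the quasi-local ring $B$). I would then propagate this non-vanishing through the resolution-first spectral sequence, producing a nonzero class in $H_j(\underline{x}; \mathrm{Tor}_i^R(M, B))$ for some $(i,j)$ with $i + j = s$ that survives to $E^\infty$, hence contributes nontrivially to $H_s$ of the total complex---contradicting $\mathrm{Tor}_s^R(M, B/(\underline{x})B) = 0$. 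The main obstacle is precisely this last propagation step: pinpointing the correct Koszul homology class on the $E^2$-page and verifying it is not killed by higher differentials. This requires careful tracking of a minimal generator of $M/PM$ lifted into $M \otimes_R B$, using the regularity of $\underline{x}$ on $B$ together with the finite-length hypothesis, and is where the characteristic $p$ hypothesis enters essentially through Corollary~\ref{corcm}.
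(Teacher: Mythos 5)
Your overall strategy --- complete $R$, replace $N$ by $R/P$, use an absolute integral closure as a Cohen--Macaulay cover, and derive a contradiction from a Tor computation --- matches the paper's, but there are two genuine gaps. The first is the reduction to a minimal prime $\mathfrak q\subseteq P$: the assertion that lifts $x_1,\dots,x_s$ of a system of parameters of $R/P$ form a \emph{partial} system of parameters of $\bar R=R/\mathfrak q$ is not formal. Unwinding it via catenarity and equidimensionality of the complete local domain $\bar R$, it says that if $Q$ is a minimal prime of $(\underline x)\bar R$ then $\dim\bar R/Q+\dim\bar R/\bar P\le\dim\bar R$, where $Q+\bar P$ is $\mathfrak m$-primary --- precisely Serre's dimension inequality. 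That inequality does hold for complete local domains, but it is itself a nontrivial intersection-type theorem, not something available for free at this point in the argument. The paper avoids the issue entirely by taking $B=(R/P)^+$: then $x_1,\dots,x_d$ (with $d=\dim R/P=\dim N$) form a \emph{full} system of parameters of the complete local domain $R/P$, and Corollary~\ref{corcm} makes them a $B$-regular sequence with no height computation required.

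The second gap is the one you flag yourself: you have no mechanism to force a class in total degree $s$ to survive to $E^\infty$ in your spectral sequence, and indeed without an extra idea there is none. The missing device is to choose the parameters inside $\operatorname{Ann}(M)$. The finite-length hypothesis says exactly that $\operatorname{Ann}(M)+\operatorname{Ann}(N)$ is $\mathfrak m$-primary, hence so is $\operatorname{Ann}(M)+P$, and one may therefore pick $x_1,\dots,x_d\in\operatorname{Ann}(M)$ whose images form a system of parameters of $R/P$. With this choice no spectral sequence is needed. Set $B_i=B/(x_1,\dots,x_i)B$; each sequence $0\to B_i\stackrel{x_{i+1}}{\longrightarrow}B_i\to B_{i+1}\to 0$ is exact because $\underline x$ is $B$-regular, and in the resulting long exact Tor sequence multiplication by $x_{i+1}$ is the zero map because $x_{i+1}M=0$, giving a surjection $\operatorname{Tor}^R_{i+1}(B_{i+1},M)\twoheadrightarrow\operatorname{Tor}^R_i(B_i,M)$. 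Starting from $\operatorname{Tor}^R_0(B,M)=B\otimes_R M\neq 0$ (your correct observation), induction yields $\operatorname{Tor}^R_d(B/(\underline x)B,M)\neq 0$, contradicting $\operatorname{pd}_R M<d$. Your proposal never exploits $x_i\in\operatorname{Ann}(M)$, and without it the higher differentials on your $E^2$-page are genuinely uncontrolled, which is why the argument stalls where you say it does.
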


\begin{proof}
One can begin by making some easy reductions. These types of reduction are very good practice
in commutative algebra. First, note that the assumption that the tensor product has finite
length is equivalent to saying that $I+J$ is $\m$-primary, where $I = \text{Ann}(N)$ and
$J = \text{Ann}(M)$. Then we can choose a prime $P$ containing $I$ such that $\dim(R/P) =
\dim (N)$, and observe that we can replace $N$ by $R/P$ without loss of generality.
It is more difficult to change $M$, since the property of being finite projective dimension
does not allow many changes.  

Let's just suppose for a moment that $R/P$ is Cohen-Macaulay. Since $P+J$ is $\m$-primary,
we can always choose $x_1,...,x_d\in J$ whose images in $B = R/P$ form a system of parameters
(and thus are a regular sequence in $R/P$). If the projective dimension of $M$
is smaller than $d = \dim(R/P)$, then  $Tor_d^{R}(B/(x_1,...,x_d)B, M) = 0$.  
Notice that $Tor_0^{R}(B, M) \ne 0$. We claim by induction that for $0\leq i\leq d$,
$Tor_i^{R}(B/(x_1,...,x_i)B, M) \ne 0$. When $i=d$ we arrive at a contradiction.
Suppose we know this for $i < d$.  Set $B _i = B/(x_1,...,x_i)$. The short exact sequence
$0\to B_i\to B_i\to B_{i+1}\to 0$ obtained by multiplication
by $x_{i+1}$ on $B_i$ induces a map of Tors when tensored with $M$. Since all $x_i$ kill $M$,
we obtain a surjection of $Tor_{i+1}^{R}(B_{i+1}, M)$ onto $Tor_i^{R}(B_i, M)$.
This finishes the induction.

Of course, we don't know that $R/P$ is Cohen-Macaulay, and in general it won't be. But now
suppose that we are in positive characteristic. We can first complete $R$ before beginning
the proof. Now $R/P$ is a complete local domain, and $S = (R/P)^+$ is Cohen-Macaulay in
the sense that $x_1,..,x_d$ form a regular sequence in this ring. The same proof works
verbatium, provided we know that $S\otimes_RM\ne 0$. But this is easy; it is even nonzero
after passing to the residue field of $S$.  \end{proof}

As a corollary, we get a favorite of the old Chicago school of commutative algebra, 
the zero-divisor conjecture (now a theorem):

\bigskip

\begin{thm} Let $(R,\m)$ be a Noetherian local ring of characteristic $p$, and let
$M$ be a nonzero finitely generated $R$-module having finite projective dimension. If
$x$ is a non-zerodivisor on $M$, then $x$ is a non-zerodivisor on $R$.
\end{thm}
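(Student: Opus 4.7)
The plan is to argue by contradiction using the New Intersection Theorem (Theorem~\ref{newintersection}). Suppose $x$ is $M$-regular yet is a zerodivisor on $R$; I will build a finitely generated $R$-module $N$ with $M\otimes_R N$ of finite length and $\dim N$ strictly greater than $\operatorname{pd}_R M$, contradicting Theorem~\ref{newintersection}.

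First I would pass to the $\mathfrak m$-adic completion, which preserves $\operatorname{pd}_R M$, the $M$-regularity of $x$, and the fact that $x$ is a zerodivisor, while ensuring that $R$ is a surjective image of a Gorenstein local ring. Since $x$ is a zerodivisor, pick an associated prime $\mathfrak p$ of $R$ with $x\in\mathfrak p$ and write $\mathfrak p=(0:_R a)$ for some nonzero $a$, so that multiplication by $a$ embeds $R/\mathfrak p$ into $R$. I claim $\operatorname{ann}_R M\not\subseteq\mathfrak p$, i.e.\ $M_{\mathfrak p}=0$: otherwise $M_{\mathfrak p}$ would be a nonzero $R_{\mathfrak p}$-module of finite projective dimension, and since $\operatorname{depth} R_{\mathfrak p}=0$ (because $\mathfrak p\in\operatorname{Ass} R$), Auslander--Buchsbaum would force $\operatorname{depth}_{R_{\mathfrak p}}M_{\mathfrak p}=0$, contradicting the $M_{\mathfrak p}$-regularity of $x/1\in\mathfrak pR_{\mathfrak p}$.

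Next, set $s=\dim R/(\mathfrak p+\operatorname{ann}_R M)$, which is strictly less than $\dim R/\mathfrak p$ by the previous step, and pick $y_1,\dots,y_s\in R$ whose images form a system of parameters in $R/(\mathfrak p+\operatorname{ann}_R M)$. Define
$$N=R/(\mathfrak p+(y_1,\dots,y_s)).$$
Then $\operatorname{supp}(M\otimes_R N)\subseteq V(\operatorname{ann}_R M+\mathfrak p+(y_1,\dots,y_s))=\{\mathfrak m\}$, so $M\otimes_R N$ has finite length, and a Krull dimension count gives $\dim N\geq \dim R/\mathfrak p-s$. Theorem~\ref{newintersection} then yields $\operatorname{pd}_R M\geq \dim R/\mathfrak p - s$.

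The hard part is upgrading this non-strict inequality to a strict one. For this I would combine Auslander--Buchsbaum on $R$, which gives $\operatorname{pd}_R M\leq\operatorname{depth} R-1$ since $x\in\mathfrak m$ is $M$-regular and hence $\operatorname{depth} M\geq 1$, with the Cohen--Macaulayness of $R^+$ from Corollary~\ref{corcm}: the latter, together with the catenarity of the complete ring $R$, lets one replace $\operatorname{depth} R$ by a full-dimensional estimate coming from a regular sequence in $R^+$, ultimately producing $\dim R/\mathfrak p-s>\operatorname{pd}_R M$ and the desired contradiction with Theorem~\ref{newintersection}.
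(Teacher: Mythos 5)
Your opening moves are sound: completing $R$, taking $\mathfrak p\in\operatorname{Ass}R$ with $x\in\mathfrak p$, and showing $M_{\mathfrak p}=0$ via Auslander--Buchsbaum (so that $\operatorname{ann}_R M\not\subseteq\mathfrak p$ and $s:=\dim R/(\mathfrak p+\operatorname{ann}_R M)<\dim R/\mathfrak p$) is all correct. The problem is that the contradiction you are reaching for simply is not there when $s>0$. Theorem~\ref{newintersection} applied to your $N$ gives $\operatorname{pd}_R M\geq\dim N\geq\dim R/\mathfrak p-s$, Auslander--Buchsbaum with $\operatorname{depth}M\geq 1$ gives $\operatorname{pd}_R M\leq\operatorname{depth}R-1$, and the standard bound for associated primes gives $\dim R/\mathfrak p\geq\operatorname{depth}R$. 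Chaining these yields only $s\geq 1$, which is perfectly consistent. Your sketch for the ``hard part'' does not close this gap: the Auslander--Buchsbaum identity $\operatorname{pd}_R M=\operatorname{depth}R-\operatorname{depth}M$ is an equality about $R$ and $M$ themselves, and the Cohen--Macaulayness of $R^+$ (or of $(R/\mathfrak p)^+$) gives no handle on $\operatorname{depth}R$, so there is no ``full-dimensional estimate'' to substitute in. In short, you have a correct proof only of the special case $s=0$, i.e.\ when $M\otimes_R R/\mathfrak p$ already has finite length.

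The paper gets around exactly this by reformulating the statement as ``every $P\in\operatorname{Ass}R$ lies inside some $Q\in\operatorname{Ass}M$'' and inducting on $\dim M$. When $s>0$ there is a non-maximal prime $Q\in\operatorname{Supp}M$ containing $P$; one localizes at $Q$, lowering $\dim M$, and invokes the inductive hypothesis. When $s=0$ one applies Theorem~\ref{newintersection} to $R/P\otimes_R M$ directly and concludes $\operatorname{depth}M=0$, so $\mathfrak m\in\operatorname{Ass}M$. That inductive localization step is the missing ingredient in your argument; it is what lets you avoid cutting $R/\mathfrak p$ down by the parameters $y_1,\dots,y_s$, which is where the needed strict inequality evaporates.
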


\begin{proof} This proof is taken from \cite{PS}.  First observe that the statement of the theorem is equivalent to saying
that every associated prime of $R$ is contained in an associated prime of $M$. We induct on
the dimension of $M$ to prove this statement. If $\dim M = 0$, then the only associated
prime of $M$ is $\m$, which clearly contains every prime of $R$. Hence we may assume that
$\dim M > 0$. Let $P\in \text{Ass}(R)$. First suppose that there is a prime $Q\in \text{Supp}(M), Q\ne \m,$
such that $P\inc Q$. Then we can change the ring to $R_Q$ and the module to $M_Q$. By induction, $P_Q$ is
contained in an associated prime of $M_Q$, so lifting back gives us that $P$ is in an associated
prime of $M$. We have reduced to the case in which $R/P\otimes_RM$ has finite length. By 
Theorem~\ref{newintersection}, $\dim (R/P)\leq \text{pd}_R(M) =  \text{depth}(R) - \text{depth}(M)$. Since
$P$ is associated to $R$,  $\dim (R/P)\geq \text{depth}(R)$ (exercise). It follows that
the depth of $M$ is $0$, and hence the maximal ideal is associated to $M$ (and contains $P$).
\end{proof} 

\medskip

For a completely different type of application, we consider an old result of Grothendieck's concerning
when the punctured spectrum of a local ring is connected.
There is a beautiful proof of Grothendieck's result in all characteristics due to Brodmann and Rung \cite{BR}. The main
point here is that if $R$ is Cohen-Macaulay and the $x_i$ are parameters, then there is a very easy
proof. It turns out that one can always assume that the $x_i$ are parameters, and then the proof
of the Cohen-Macaulay case directly generalizes to one in characteristic $p$ using $R^+$. 
The exact statement is:

\begin{thm} Let $(R,\m)$ be a complete local Noetherian domain of dimension $d$, and let
$x_1,...,x_k\in \m$, where $k\leq d-2$. Then the punctured spectrum of $R/(x_1,...,x_k)$ is
connected.
\end{thm}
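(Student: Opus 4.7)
The plan is to lift the question upstairs to $R^+$, use Corollary~\ref{corcm} to force the vanishing of low-degree local cohomology of $R^+/(x_1,\ldots,x_k)R^+$, identify this ring with its own sections over its punctured spectrum, and conclude connectedness from the fact that a local ring has no nontrivial idempotents. Throughout I work in characteristic $p>0$ so that Theorem~\ref{module} and Corollary~\ref{corcm} apply (the Cohen structure theorem guarantees that $R$, being complete, is a surjective image of a regular---hence Gorenstein---local ring).

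First, extend $x_1,\ldots,x_k$ to a full system of parameters $x_1,\ldots,x_d$ of $R$, which is possible because $k\le d-2<d$. By Corollary~\ref{corcm}, these elements form a regular sequence on $R^+$ and $H^i_{\m}(R^+)=0$ for all $i<d$. Setting $B=R^+/(x_1,\ldots,x_k)R^+$, the short exact sequences
$$0\to R^+/(x_1,\ldots,x_{j-1})R^+\xrightarrow{x_j}R^+/(x_1,\ldots,x_{j-1})R^+\to R^+/(x_1,\ldots,x_j)R^+\to 0$$
and the induced long exact sequences in $H^*_{\m}$ yield, by induction on $j$, the vanishing $H^i_{\m}(R^+/(x_1,\ldots,x_j)R^+)=0$ for $i<d-j$. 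Taking $j=k$ and using $d-k\ge 2$ gives $H^0_{\m}(B)=H^1_{\m}(B)=0$.

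Let $U=\Spec(B)\setminus V(\m B)$. Since $R^+$ has a unique maximal ideal $\m^+$, so does $B$, hence $V(\m B)=\{\m B\}$ and $U$ is genuinely the punctured spectrum of $B$. The \v{C}ech-derived exact sequence
$$0\to H^0_{\m}(B)\to B\to\Gamma(U,\mathcal{O}_U)\to H^1_{\m}(B)\to 0$$
then identifies $B$ with $\Gamma(U,\mathcal{O}_U)$. A local ring has no nontrivial idempotents: if $e^2=e$ and $e\ne 0,1$, then neither $e$ nor $1-e$ is a unit, so both lie in the maximal ideal, contradicting $e+(1-e)=1$. Hence $\Gamma(U,\mathcal{O}_U)\cong B$ has no nontrivial idempotents and $U$ is connected.

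Finally, to descend back to $A=R/(x_1,\ldots,x_k)$, I use lying-over for the integral extension $R\hookrightarrow R^+$: any prime $\tilde{\mathfrak p}$ of $R$ containing $(x_1,\ldots,x_k)$ with $\tilde{\mathfrak p}\ne\m$ is the contraction of some prime $\mathfrak P$ of $R^+$, which cannot equal $\m^+$ (since $\m^+\cap R=\m$) and automatically contains $(x_1,\ldots,x_k)R^+$. This shows the natural map $U\twoheadrightarrow\Spec(A)\setminus\{\m\}$ is surjective, and the continuous image of the connected space $U$ is connected. The main subtlety is the validity of the displayed local-cohomology exact sequence for the non-Noetherian ring $B$; this is routine once one computes $H^i_{\m}(-)$ via the \v{C}ech complex on generators of $\m$ inside the Noetherian ring $R$, exactly as in Proposition~\ref{propbasechange}.
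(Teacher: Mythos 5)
Your argument for the case that $x_1,\ldots,x_k$ is part of a system of parameters is correct and takes a genuinely different route from the paper. The paper argues by contradiction: given a disconnecting pair of ideals $I,J$, one chooses parameter combinations $u+v$ and $y+z$ with $u,y\in I$ and $v,z\in J$, and the regular-sequence property of $R^+$ on these parameters forces one of $I,J$ to become $\m$-primary, which is impossible. Your route instead packages the same regular-sequence input into the vanishing $H^0_{\m}(B)=H^1_{\m}(B)=0$ for $B=R^+/(x_1,\ldots,x_k)R^+$, identifies $B$ with $\Gamma(U,\mathcal O_U)$, and concludes from the absence of nontrivial idempotents in the local ring $B$. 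This is cleaner and more conceptual; the descent to $\operatorname{Spec}(R/(x_1,\ldots,x_k))$ via lying-over and the remark about computing $H^i_\m(-)$ of the non-Noetherian $B$ by a \v{C}ech complex on generators of $\m$ in $R$ (Independence of Base) are both fine.

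However, there is a genuine gap at the very first step. You assert that $x_1,\ldots,x_k$ can be extended to a full system of parameters ``because $k\le d-2<d$,'' but this justification is false: having fewer than $d$ elements does not make them a partial system of parameters. If $\operatorname{ht}(x_1,\ldots,x_k)<k$ --- for instance $x_1=x$, $x_2=xy$ in $k[[x,y,z,w]]$, where the height is $1<2=k$ --- no such extension exists, and then $x_1,\ldots,x_k$ need not be a regular sequence on $R^+$, so the inductive local-cohomology vanishing breaks down at the very first short exact sequence (multiplication by $x_2$ on $R^+/x_1R^+$ is not injective). The paper devotes the entire second half of its proof to reducing the general case to the parameter case: one shows that a disconnecting pair $(I,J)$ for the punctured spectrum of $R/(x_1,\ldots,x_k)$ gives rise to a disconnecting pair for $R/K$, where $K$ is the radical of an ideal generated by $k$ actual parameters. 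This reduction is not a formality --- one cannot simply pass to $\sqrt{(x_1,\ldots,x_k)}$ and hope to cut it out up to radical by $h=\operatorname{ht}(x_1,\ldots,x_k)$ parameters, since the arithmetic rank of an ideal can exceed its height, and containing $V(K)\subset V(x_1,\ldots,x_k)$ with $V(K)$ connected would not by itself force $V(x_1,\ldots,x_k)\setminus\{\m\}$ to be connected. Until you supply some version of this reduction, the proof only covers the case in which $x_1,\ldots,x_k$ has height $k$.
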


\begin{proof} We take the proof from \cite{HH}.
First assume that the $x_i$ are parameters.
Let $I$ and $J$ give a disconnection of the punctured spectrum
of $R/(x_1,...,x_k)$. Choose elements $u+v$, $y+z$ which together with the $x_i$ form parameters
such that $u,y\in I$ and $v,z\in J$. Modulo $(x_1,...,x_k)$ one has the relation
$y(u+v) - u(y+z) = 0$. Since the parameters form a regular sequence in $R^+$, we obtain that
$y\in (y+z,x_1,...,x_k)R^+$. Similarly, $z\in (y+z,x_1,...,x_k)R^+$. Write $y = c(y+z)$ modulo
$(x_1,...,x_k)$ and $z = d(y+z)$ modulo $(x_1,...,x_k)$. Then $(1-c-d)(y+z)\in (x_1,...,x_k)R^+$
so that $1-c-d\in (x_1,...,x_k)R^+$. At least one of $c$ or $d$ is a unit in $R^+$, say $c$. But then
$y$ is not a zerodivisor modulo $(x_1,...,x_k)R^+$ and this implies that $J\inc (x_1,...,x_k)R^+$.
Then $I$ must be primary to $mR^+$ which is a contradiction since the height of $I$ is too small.

It remain to reduce to the case in which the $x_i$ are parameters.

We claim that any $k$-elements are up to radical in an ideal generated by $k$-elements which are
parameters.
 The key point is to prove this for $k = 1$. Suppose that $x = x_1$ is given. If $x$
already has height one we are done. If $x$ is nilpotent, choose $y$ to be any parameter in $I$. So assume
that $x$ is not in every minimal prime. For $n\gg 0$, $0:x^n = 0:x^{n+1}$, and changing $x$ to $x^n$,
we obtain that $x$ is not a zero divisor on $R/(0:x)$. Then there is an element $s\in 0:x$ such that
$y = x +s$ has height one, and we may multiply $s$ by a general element of $I$ to obtain that
$y\in I$. But $xy = x^2$ so that $x$ is nilpotent on $(y)$. Inductively choose
$y_1,...,y_{k-1}$ which are parameters such that the ideal they generate contains $x_1,...,x_{k-1}$ up
to radical. Replace $R$ by $R/(y_1,...,y_{k-1})$, and repeat the $k = 1$ step.

Now if
$I$ and $J$ disconnect the punctured spectrum of $R/(x_1,...,x_k)$ choose any ideals $I'$ and $J'$
of height at least $k$, not primary to the maximal ideal such that $I'$ contains $I$ up to radical
and $J'$ contains $J$ up to radical. Choose parameters in $I'\cap J'$ such that the $x_i$ are in the
radical $K$ of these parameters. Then $K + I$ and $K + J$ disconnect the punctured spectrum of $R/K$.
\end{proof}

\end{document}